\newcommand{\titel}{Cut Tree Structures with Applications on Contraction-Based Sparsification}
\definecolor{hellblau}{rgb}{0.2,0.4,1}
\definecolor{dunkelblau}{rgb}{0,0,0.8}
\definecolor{dunkelgruen}{rgb}{0,0.5,0}
\theoremstyle{plain}
\newtheorem{satz}{Satz}[section]
\newtheorem{theorem}[satz]{Theorem}
\newtheorem{lemma}[satz]{Lemma}
\newtheorem{proposition}[satz]{Proposition}
\newtheorem{corollary}[satz]{Corollary}
\theoremstyle{remark}
\theoremstyle{definition}
\newtheorem{definition}[satz]{Definition}
\providecommand{\customgenericname}{}
\newcommand{\newcustomtheorem}[2]{
  \newenvironment{#1}[1]
  {
   \renewcommand\customgenericname{#2}
   \renewcommand\theinnercustomgeneric{##1}
   \innercustomgeneric
  }
  {\endinnercustomgeneric}
}
\begin{document}
	\title{\titel}
	\author{On-Hei S. Lo\thanks{This research was supported by the DFG grant SCHM 3186/1-1.}\\ \textit{Institute of Mathematics}\\\textit{TU Ilmenau, Germany}
		\and Jens M. Schmidt\addtocounter{footnote}{-1}\footnotemark\\ \textit{Institute of Mathematics}\\\textit{TU Ilmenau, Germany}}
	
	\date{}
	\maketitle
	
	\begin{abstract}
		We introduce three new cut tree structures of graphs $G$ in which the vertex set of the tree is a partition of $V(G)$ and contractions of tree vertices satisfy sparsification requirements that preserve various types of cuts. Recently, Kawarabayashi and Thorup \cite{Kawarabayashi2015a} presented the first deterministic near-linear edge-connectivity recognition algorithm. A crucial step in this algorithm uses the existence of vertex subsets of a simple graph $G$ whose contractions leave a graph with $\tilde{O}(n/\delta)$ vertices and $\tilde{O}(n)$ edges ($n := |V(G)|$) such that all non-trivial min-cuts of $G$ are preserved. We improve this result by eliminating the poly-logarithmic factors, that is, we show a contraction-based sparsification that leaves $O(n/\delta)$ vertices and $O(n)$ edges and preserves all non-trivial min-cuts. We complement this result by giving a sparsification that leaves $O(n/\delta)$ vertices and $O(n)$ edges such that all (possibly not minimum) cuts of size less than $\delta$ are preserved, by using contractions in a second tree structure. As consequence, we have that every simple graph has $O(n/\delta)$ $\delta$-edge-connected components, and, if it is connected, it has $O((n/\delta)^2)$ non-trivial min-cuts. All these results are proven to be asymptotically optimal.
		
		By using a third tree structure, we give a new lower bound on the number of \emph{pendant pairs} (that is, pairs of vertices $v$, $w$ with $\lambda(v, w) = \min\{d(v),d(w)\}$). The previous best bound was given 1974 by Mader, who showed that every simple graph contains $\Omega(\delta^2)$ pendant pairs. We improve this result by showing that every simple graph $G$ with $\delta \geq 5$ or $\lambda \geq 4$ or $\kappa \geq 3$ contains $\Omega(\delta n)$ pendant pairs. We prove that this bound is asymptotically tight from several perspectives, and that $\Omega(\delta n)$ pendant pairs can be computed efficiently.
	\end{abstract}

	\section{Introduction}
	Edge-connectivity and the structure of (near-)minimum cuts of graphs have been studied intensively for the last 60 years. Many of the discovered structures like Gomory-Hu trees~\cite{Gomory1961}, cactus representations and the lattice of minimum $s$-$t$-cuts led to increasingly faster algorithms for recognizing, listing or counting various (near-)minimum cuts of graphs. In this paper, we propose three new cut tree structures that are more fine-grained than the ones known and that lead to new insights into edge-connectivity.

	\subsection{Previous Work}
	Recently, Kawarabayashi and Thorup~\cite{Kawarabayashi2015a} presented the first deterministic near-linear time min-cut algorithm for simple graphs. They showed that certain vertex sets of the input graph can be contracted in near-linear time such that $\tilde{O}(n/\delta)$ vertices and $\tilde{O}(n)$ edges are left and all non-trivial min-cuts are preserved. This contraction-based sparsification implies a min-cut algorithm in near-linear time, as the contractions leave a graph on which Gabow's algorithm~\cite{Gabow1995} can be applied, which runs itself in time $\tilde{O}(\lambda m)$. Subsequently, Henzinger, Rao and Wang~\cite{Henzinger2017} obtained an improved variant of~\cite{Kawarabayashi2015a} with running time $O(m \log^2 n \log \log^2 n)$ by replacing its diffusion subroutine with a flow-based one, but again relying on contraction-based sparsification.
	
	Here, we will focus on the existential question to which extent such contraction-based sparsifiers can be improved and lay out the structural foundation of this concept. The importance of this question does not only stem from the fact that the currently fastest methods are using these sparsifiers, it may also serve for constructing faster contraction-based algorithms.
	
	From a different angle, min-cuts can be recognized by the well-known, simple and widely used minimum cut algorithm of Nagamochi and Ibaraki~\cite{Nagamochi1992b}, which refines the work of Mader~\cite{Mader1973,Mader1971b} in the early 70s, and was simplified by Stoer and Wagner~\cite{Stoer1997} and Frank~\cite{Frank1994}. The key approach of this algorithm is to iteratively contract a pendant pair of the input graph in near-linear time by using \emph{maximal adjacency orderings} (also known as \emph{maximum cardinality search}~\cite{Tarjan1984}). Having done that $n-1$ times, one can obtain a minimum cut by considering the minimum degree of all intermediate graphs.
	
	This algorithm motivates the study of pendant pairs: How many (distinct) pendant pairs does a graph with a given minimum degree have? If there are many and, additionally, these could be computed efficiently, this would lead immediately to an improvement of the running time of the Nagamochi-Ibaraki-algorithm (we conjecture that $\omega(1)$ pendant pairs could be computed in linear time, which would imply such a speed-up). Thus, we aim for the fundamental and natural question of finding a good lower bound on the number of distinct pendant pairs in graphs with a given minimum degree.
	
	As early as 1973, and originally motivated by the structure of minimally $k$-edge-connected graphs, Mader proved that every graph with minimum degree $\delta \geq 1$ contains at least one pendant pair~\cite{Mader1973}.
	Later, Mader improved his result by showing that every simple graph with minimum degree $\delta$ contains $\Omega(\delta^2)$ pendant pairs~\cite{Mader1974a}, which is the best result known so far.
	
	We will solve both questions, about the existence of lower bounds and of good contraction-based sparsifiers, by proposing several new cut tree structures of graphs. We will mainly consider simple graphs, as these allow us to prove the strongest bounds (we give an example that shows that all bounds for multigraphs must be considerably weaker).

	\subsection{Our Results}
	In this paper, we improve the bounds of the contraction-based sparsifier of Kawarabayashi and Thorup by eliminating its poly-logarithmic factors. Hence, every simple graph can be sparsified by contractions of vertex subsets such that $O(n/\delta)$ vertices and $O(n)$ edges are left and every non-trivial min-cuts is preserved. We also show that there are vertex subsets whose contraction leaves only $O(n)$ edges and $O(n/\delta)$ vertices such that every cut of size smaller than $\delta$ is preserved. While the first result has the benefit of handling non-trivial min-cuts of size $\delta$, the second has the benefit of dealing with all cuts of size less than $\delta$ and is not restricted to non-trivial cuts; in this sense, these results are incomparable.
	
	Kawarabayashi and Thorup showed also that every connected simple graph has $O((n^2 \log^c n)/\delta^2)$ non-trivial minimum cuts for some constant $c$~\cite[Corollary~4]{Kawarabayashi2015a}. We improve this by eliminating the poly-logarithmic factor to the asymptotically optimal bound $O((n/\delta)^2)$. We also show that every simple graph $G$ with $\delta > 0$ has $O(n/\delta)$ many $\delta$-edge-connected components, which appears to be unknown so far. All results are proven to be asymptotically optimal.
	
	We further improve Mader's lower bound $\Omega(\delta^2)$ on the number of pendant pairs by showing that every simple graph that satisfies $\delta \geq 5$ or $\lambda \geq 4$ or $\kappa \geq 3$ contains $\Omega(\delta n)$ pendant pairs; this exhibits a dependency on $n$ instead of $\delta$, which is usually much larger. We prove that this result is tight with respect to the order of the bound and with respect to every assumption. All cut tree structures that we use can be computed efficiently. In particular, we show how to compute $\Omega(\delta n)$ pendant pairs in time $O(n \theta_{flow})$, where $\theta_{flow}$ is the time needed for finding a maximum $s$-$t$-flow.

	\subsection{Technical Overview}
	The proofs are inspired by the work of Mader~\cite{Mader1974a} and Cai~\cite{Cai1993}. We propose a general framework that unifies the three cut tree structures used in this paper. The framework may also be of independent interest. Given a non-crossing family of cuts in a graph $G$ such that the cuts \emph{cover} some binary relation on the vertex set $V(G)$ (meaning that every vertex pair of the relation is separated by at least one cut in the family), we consider a tree $T$ that satisfies the following properties: (i) $V(T)$ is a partition of $V(G)$; (ii) the binary relation is covered by the cuts of $G$ that correspond to the edges of $T$; (iii) $T$ is minimal in the sense that (ii) does not hold anymore when any edge of $T$ is contracted.
	
	We will use the binary relation that consists of the vertex pairs that are separated by some non-trivial min-cut, that are non-$\delta$-edge-connected and that are non-pendant, respectively. For these choices, we can show that the average size of the vertices of $T$ is $\Omega(\delta)$. This will allow us to count pendant pairs and obtain the desired lower bound on their number. Moreover, this implies that the number of vertices of $T$ is $O(|V(G)|/\delta)$. If every edge-cut that corresponds to an edge in $T$ contains $O(\delta)$ edges, we can contract the vertices of $T$ to obtain a graph having $O(|V(G)|/\delta)$ vertices and $O(|V(G)|)$ edges and hence achieve our sparsification results.

	\section{Preliminaries}
	All graphs considered in this paper are non-empty, finite, unweighted and undirected. Let $G:=(V,E)$ be a graph. \emph{Contracting} a vertex subset $X \subseteq V$ identifies all vertices in $X$ and deletes occurring self-loops (we do not require that $X$ induces a connected graph in $G$).
	
	For non-empty and disjoint vertex subsets $X,Y \subset V$, let $E_G(X,Y)$ denote the set of all edges in $G$ that have one endvertex in $X$ and one endvertex in $Y$. Let further $\overline{X} := V-X$, $d_G(X,Y):=|E_G(X,Y)|$ and $d_G(X):=|E_G(X,\overline{X})|$; if $X=\{v\}$ for some vertex $v\in V$, we simply write $E_G(v,Y)$, $d_G(v,Y)$ and $d_G(v)$. A subset $\emptyset \neq X \subset V$ of a graph $G$ is called a \emph{cut} of $G$. Let a cut $X$ of $G$ be \emph{trivial} if $|X|=1$ or $|\overline{X}|=1$. Let the \emph{length} and \emph{size} of a path be the number of its edges and vertices, respectively. Let $\delta(G) := \min_{v\in V} d_G(v)$ be the \emph{minimum degree} of $G$. For a vertex $v \in G$, let $N_G(v)$ be the set of neighbors of $v$ in $G$.
	
	For two vertices $v,w\in V$, a \emph{$v$-$w$-cut} is a vertex set $X\subseteq V$ such that exactly one of $\{v,w\}$ is in $X$. Let $\lambda_G(v,w)$ be the minimum $d_G(X)$ over all $v$-$w$-cuts $X$. Two vertices $v,w\in V$ are called \emph{$k$-edge-connected} if $\lambda_G(v,w) \geq k$. For any $k$, the $k$-edge-connectivity relation on vertices is symmetric and transitive, and thus its reflexive closure is an equivalence relation that partitions $V$; let the \emph{$k$-edge-connected components} be the blocks of this partition. The \emph{edge-connectivity} $\lambda(G)$ of $G$ is $\max_{v,w\in V} \lambda_G(v,w)$. Let $\kappa(G)$ be the \emph{vertex-connectivity} of $G$.
	
	We call a pair $\{v,w\}$ of vertices \emph{pendant} if $\lambda_G(v,w) = \min\{d_G(v),d_G(w)\}$. In order to increase readability, we will omit subscripts whenever the graph is clear from the context. Two cuts $X$ and $Y$ \emph{cross} if $X \cap Y$, $X \cap \overline{Y}$, $\overline{X} \cap Y$ and $\overline{X} \cap \overline{Y}$ are non-empty. The following two lemmas adapt the well-known uncrossing technique, which allows to replace crossing minimum cuts by non-crossing ones such that the set of edges covered remains the same.
	
	\begin{lemma}[\cite{Gomory1961}]\label{lem:noncrossing}
		Let $X$ be a minimum $a$-$b$-cut of $G$, where $a, b\in V(G)$. Let $s,t \in X$ and let $Y$ be a minimum $s$-$t$-cut of $G$ that crosses $X$. Then $X \cap Y$ or $X - Y$ is a minimum $s$-$t$-cut.
	\end{lemma}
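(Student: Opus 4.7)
The plan is to apply the standard submodularity (uncrossing) technique for the cut function $d$ and choose which of the two standard decompositions of a crossing pair to use based on the position of $b$ with respect to $Y$.

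First I would set up the notation and resolve the symmetries. Without loss of generality, I may assume $a \in X$, so $b \in \overline{X}$. Since $s, t \in X$ and $Y$ is an $s$-$t$-cut, exactly one of $s, t$ lies in $Y$; by symmetry, assume $s \in X \cap Y$ and $t \in X \cap \overline{Y}$. In particular, both $X \cap Y$ and $X - Y = X \cap \overline{Y}$ are $s$-$t$-cuts, so each has $d$-value at least $\lambda_G(s,t) = d(Y)$.

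Next I would write down the two submodular inequalities available for any two cuts of $G$:
\begin{equation*}
d(X) + d(Y) \;\geq\; d(X \cap Y) + d(X \cup Y),
\end{equation*}
\begin{equation*}
d(X) + d(Y) \;\geq\; d(X - Y) + d(Y - X).
\end{equation*}
The idea is that one of the two ``companion'' sets $X \cup Y$ or $Y - X$ must be an $a$-$b$-cut, and then the minimality of $X$ forces the remaining term in that inequality to be at most $d(Y)$. I would split into two cases depending on whether $b \in Y$. If $b \notin Y$, then $X \cup Y$ contains $a$ but not $b$, so it is an $a$-$b$-cut, giving $d(X \cup Y) \geq d(X)$ and hence $d(X \cap Y) \leq d(Y)$ from the first inequality. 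If $b \in Y$, then $Y - X$ contains $b$ but not $a$ (since $a \in X$), so it is an $a$-$b$-cut, giving $d(Y - X) \geq d(X)$ and hence $d(X - Y) \leq d(Y)$ from the second inequality. In either case, one of $X \cap Y$ and $X - Y$ achieves the $s$-$t$ minimum-cut value, as claimed.

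The main thing to watch for is not an obstacle but a bookkeeping trap: one must verify that in each case the companion set is indeed a nonempty proper subset of $V$, which follows directly from the assumption that $X$ and $Y$ cross (all four regions are nonempty). Once the crossing hypothesis is used to guarantee this, the argument above goes through with no further complication.
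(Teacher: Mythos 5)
Your proof is correct: the case split on the position of $b$ (using submodularity $d(X)+d(Y)\geq d(X\cap Y)+d(X\cup Y)$ when $b\notin Y$ and posimodularity $d(X)+d(Y)\geq d(X-Y)+d(Y-X)$ when $b\in Y$, combined with the minimality of $X$ as an $a$-$b$-cut) is exactly the standard Gomory--Hu uncrossing argument. The paper does not prove this lemma itself but cites \cite{Gomory1961}; your argument matches the intended proof and is the same style as the paper's own proof of Lemma~\ref{lem:strongernc}, which is the special case where both cuts are global min-cuts and no case distinction is needed.
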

	
	\begin{lemma}[\cite{Dinits1976}]\label{lem:strongernc}
		Let $X, Y$ be two crossing min-cuts of $G$. Then $X \cap Y$ and $X - Y$ are min-cuts.
	\end{lemma}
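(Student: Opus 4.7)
The plan is to deduce the lemma from the two standard cut inequalities that hold for any graph $G$ and any two cuts $A, B \subseteq V$ with $A, B, \overline{A}, \overline{B}$ all non-empty:
\begin{align*}
	d_G(A) + d_G(B) &\geq d_G(A \cap B) + d_G(A \cup B), \\
	d_G(A) + d_G(B) &\geq d_G(A - B) + d_G(B - A).
\end{align*}
Both are proven by a routine case analysis on which of the four blocks $A \cap B$, $A - B$, $B - A$, $\overline{A \cup B}$ contains each endpoint of an edge; every edge contributes to the right-hand side at most as often as to the left-hand side.

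With these in hand, set $\lambda := d_G(X) = d_G(Y)$. Because $X$ and $Y$ cross by hypothesis, each of $X \cap Y$, $X - Y$, $Y - X$, and $\overline{X \cup Y}$ is non-empty, and since $X, Y$ are proper, the complements of these four sets are also non-empty. Hence all four are genuine cuts of $G$ and therefore have $d_G$-value at least $\lambda$. Plugging the lower bounds into the submodular inequality gives $2\lambda \geq d_G(X \cap Y) + d_G(X \cup Y) \geq 2\lambda$, forcing equality throughout, so $d_G(X \cap Y) = \lambda$ and $X \cap Y$ is a min-cut. The identical argument applied to the posimodular inequality yields $d_G(X - Y) = d_G(Y - X) = \lambda$, so $X - Y$ is also a min-cut.

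There is no real obstacle in the proof; it is a direct application of submodularity and posimodularity. The only point to be careful about is confirming that crossing guarantees each of the four derived sets is a legitimate (non-empty, proper) cut, which is precisely the role of the crossing hypothesis and is what prevents the inequalities from being applied to vacuous sets.
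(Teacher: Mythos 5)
Your proof is correct and follows the same route as the paper: submodularity forces $2\lambda = d(X)+d(Y) \geq d(X\cap Y)+d(X\cup Y) \geq 2\lambda$, and posimodularity does the same for $X-Y$ and $Y-X$. Your explicit check that crossing makes all four corner sets (and hence their complements) non-empty, so that the lower bound $\lambda$ applies to each, is the one detail the paper leaves implicit.
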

	\begin{proof}
		Since $2\lambda = d(X) + d(Y) \geq d(X \cap Y) + d(X \cup Y) \geq 2\lambda$, $X \cap Y$ is a min-cut. Similarly, $X - Y$ is a min-cut.
	\end{proof}
	
	Let $T$ be a tree. In this paper, we call the vertices of $T$ \emph{blocks}, as we will require them to form a partition of the vertex set of a graph $G$. For any tree edge $AB \in E(T)$, let $C_{AB}$ be the union of the blocks that are contained in the component of $T-AB$ containing $A$, and let $c(AB) := d_G(C_{AB})$. We use $V_k$ to denote the set of blocks of $T$ having degree $k$ in $T$ and $V_{>k}$ to denote $\bigcup_{j>k}  V_{j}$. We call the blocks in $V_1$ \emph{leaf blocks}. In $T$, the set $V_2$ induces a family of disjoint paths; we call each such path a \emph{2-path}. For every block $A \in V_2$, let $A$ be in $V_2^{in}$ if all of its neighbors are also in $V_2$ and otherwise in $V_2^{out}$. The blocks in $V_2^{out}$ are exactly the blocks at the ends of 2-paths.
	
	\begin{lemma}\label{lem:leafbound}
		Let $T$ be a tree. If $|V(T)|>1$, then $|V_{>2}| \leq |V_1|-2$ and $|V_2^{out}|\leq 4|V_1|-6$.
	\end{lemma}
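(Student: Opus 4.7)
The first bound is a standard degree-counting identity for trees, which I would derive first since it will be reused. Writing $|V(T)|=|V_1|+|V_2|+|V_{>2}|$ and applying the handshake lemma $\sum_v d_T(v) = 2(|V(T)|-1)$ gives
\[
|V_1| + 2|V_2| + \sum_{k\geq 3} k|V_k| \;=\; 2|V_1| + 2|V_2| + 2|V_{>2}| - 2,
\]
so $\sum_{k\geq 3}(k-2)|V_k| = |V_1|-2$. Since every term on the left is at least $|V_k|$, this yields $|V_{>2}| \leq |V_1|-2$, which also forces $|V_1|\geq 2$ whenever $|V(T)|>1$.

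For the second bound, the plan is to bound the number of 2-paths and observe that each 2-path contributes at most two blocks to $V_2^{out}$. A 2-path with at least two blocks has exactly two endpoints in $V_2^{out}$ (its two ends), while a 2-path consisting of a single block contributes a single block to $V_2^{out}$; in either case $|V_2^{out}| \leq 2\cdot(\text{number of 2-paths})$.

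To count 2-paths, I would form the tree $T'$ obtained from $T$ by suppressing every degree-$2$ vertex, so that $V(T')=V_1\cup V_{>2}$ and $T'$ is again a tree. Every edge of $T'$ is either an edge of $T$ with both endpoints in $V_1\cup V_{>2}$, or corresponds to a unique maximal path in $T[V_2]$, i.e.\ a 2-path. Therefore the number of 2-paths is at most $|E(T')| = |V_1|+|V_{>2}|-1$, which by the first part is at most $2|V_1|-3$. Combining gives $|V_2^{out}| \leq 4|V_1|-6$, as required.

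No step looks like a serious obstacle; the only place one has to be careful is the single-vertex 2-path case, which might tempt one to assert a bijection between 2-paths and pairs in $V_2^{out}$. Using the inequality ``$\leq 2\cdot(\text{number of 2-paths})$'' instead of equality handles both cases uniformly and is already tight enough for the claimed bound.
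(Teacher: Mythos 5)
Your proof is correct and follows essentially the same route as the paper: the handshake-lemma count for $|V_{>2}|\leq |V_1|-2$, and then forming the tree $T'$ on $V_1\cup V_{>2}$ (the paper contracts each 2-path into a neighbor, you suppress degree-2 vertices — the same object) to bound the number of 2-paths by $|E(T')|=|V_1|+|V_{>2}|-1$ and hence $|V_2^{out}|\leq 2(|V_1|+|V_{>2}|-1)\leq 4|V_1|-6$. Your explicit handling of single-block 2-paths via the inequality rather than a bijection is a minor point of extra care, not a difference in method.
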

	\begin{proof}
		As $T$ is a tree, $2(|V(T)|-1) = \sum_{A \in V(T)} d_T(A) \geq |V_1| + 2|V_2| + 3|V_{>2}|$, which yields $|V_{>2}| \leq |V_1|-2$. Since every 2-path contains at most two blocks in $V_2^{out}$ and contracting every 2-path along with one of its neighbors gives a tree $T'$ with $V(T') =  V_1 \cup  V_{>2}$, we have $|V_2^{out}| \leq 2E(T') = 2(|V_1|+|V_{>2}|-1)$. Thus, $|V_2^{out}|\leq 4|V_1|-6$.
	\end{proof}

	\section{Contraction-Based Sparsification}\label{sec:improvement}
	Kawarabayashi and Thorup contract vertex subsets of $G$ such that $O((n \log^c n)/\delta)$ vertices and $O(n \log^c n)$ edges remain for some constant $c$ and all non-trivial min-cuts are preserved. Next, we show the existence of two such contraction-based sparsifications by introducing tree structures that either cover non-trivial min-cuts or cover cuts of size less than $\delta$. In other words, contracting the vertices of the former (latter) tree will preserve all non-trivial min-cuts (cuts of size less than $\delta$).

	\subsection{Preserving Non-Trivial Min-Cuts}\label{sec:nt-tree}
	We seek vertex sets whose contractions preserve all non-trivial min-cuts. We show that it is possible to find them greedily: We start with the partition $\{V\}$; whenever there is a block of the partition containing two vertices that are separated by some non-trivial min-cut, we split this block into two blocks. In particular, we can find non-crossing non-trivial min-cuts for the splits, which shows an underlying tree structure (we refer to Appendix~\ref{sec:appcbs} for thorough proofs). We state the definition of this tree as follows.
	
	\begin{definition}
		A \emph{non-trivial min-cut tree} $T$ of a graph $G=(V,E)$ is a tree whose vertex set partitions $V$ such that
		\begin{compactitem}
			\item[(i)] for every edge $AB \in T$, $C_{AB}$ is a non-trivial min-cut of $G$, and
			\item[(ii)] for every two vertices $a$ and $b$ that are separated by some non-trivial min-cut of $G$, there is an edge $AB \in T$ such that $C_{AB}$ separates $a$ and $b$.
		\end{compactitem}
	\end{definition}
	
	Condition~(i) implies that, for $\lambda \neq 0$, no leaf block is a singleton and that $c(AB)=\lambda$ for all $AB \in T$. Condition~(ii) implies that all non-trivial minimum cuts will be preserved if every block is contracted.
	
	Non-trivial min-cut trees do not exist for every graph, even if we restrict ourselves to graphs having no isolated vertex. For instance, consider a cycle of length at least four. As every edge is contained in a non-trivial min-cut, every leaf block $A$ of a non-trivial min-cut tree $T$ is an independent set of size at least two in $G$ due to Condition~(i). Then the tree edge $AB \in E(T)$ satisfies $c(AB) \geq 4$, which contradicts Condition~(i). However, we can show that non-trivial min-cut trees exist for all simple graphs $G$ with $\lambda(G) \neq 0, 2$. We refer to Appendix~\ref{sec:appcbs} for proofs.
	
	\begin{proposition}\label{prop:computationn}
		Let $G$ be a simple graph with $\lambda(G) \neq 0, 2$. Then a non-trivial min-cut tree $T$ of $G$ can be computed in time $O(n\theta_{flow})$.
	\end{proposition}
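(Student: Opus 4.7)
My plan is to combine iterative non-trivial-min-cut splitting with a Gomory-Hu tree serving as an efficient oracle for candidate splits. First, I would compute a Gomory-Hu tree $H$ of $G$ in $O(n\theta_{flow})$ time via the classical $n-1$ max-flow construction; from $H$, $\lambda=\lambda(G)$ is the minimum edge weight, and for every pair $s,t$ the minimum-weight edge on the $s$-$t$-path in $H$ yields both $\lambda_G(s,t)$ and an explicit min $s$-$t$-cut in $O(n)$ time per query.

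Next, I would assemble $T$ from the family $\mathcal{F}$ of cuts associated with edges of $H$ of weight $\lambda$ that are also non-trivial (non-triviality is checkable in $O(n)$ per edge). Because $H$ is a tree, $\mathcal{F}$ is automatically laminar, so it naturally defines a partition of $V(G)$ together with a tree structure on the parts in which each tree edge corresponds to a cut in $\mathcal{F}$; this makes condition~(i) of the definition of a non-trivial min-cut tree hold by construction, and the partitioning/contraction step runs in $O(n+m)$ additional time.

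The main obstacle will be verifying condition~(ii): a pair $a,b$ separated by some non-trivial min-cut of $G$ need not be separated by any cut in $\mathcal{F}$, because the Gomory-Hu representative of $(a,b)$ may itself be a trivial min-cut even though non-trivial $a$-$b$-min-cuts exist. I would handle this via the uncrossing Lemma~\ref{lem:strongernc}: combining the trivial representative with an assumed non-trivial $a$-$b$-min-cut (found by one additional max-flow on $(a,b)$) produces, after appropriate intersection or difference, a non-trivial $a$-$b$-min-cut that can be inserted into $\mathcal{F}$ while preserving laminarity. The hypothesis $\lambda\neq 0,2$ is used precisely here to exclude degenerate uncrossing outcomes (as illustrated by the cycle example in the text, where every attempted uncrossing collapses to a trivial or non-cut set) and to guarantee that a non-trivial min-cut exists whenever one is needed. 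Each augmentation costs at most one extra max-flow and can be amortized to a distinct additional split of the eventual tree $T$, of which there are at most $n-1$, so the total running time stays within $O(n\theta_{flow})$.
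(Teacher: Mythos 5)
Your high-level architecture (greedily refine a partition using non-trivial min-cuts that are laminar with the cuts already chosen) matches the paper's, but the proposal leaves the actual heart of the proof unproven. The step you describe as ``combining the trivial representative with an assumed non-trivial $a$-$b$-min-cut \dots produces, after appropriate intersection or difference, a non-trivial $a$-$b$-min-cut that can be inserted into $\mathcal{F}$ while preserving laminarity'' is precisely the claim that requires all the work, and it is false as stated if you only invoke Lemma~\ref{lem:strongernc} naively: when the non-trivial min-cut $X$ meets the current block $B$ in a single vertex, the uncrossed sets $X\cap B$ and $X-C_i$ can degenerate into trivial cuts, so intersection/difference alone does not suffice. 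The paper's proof spends its entire second half on exactly this case analysis, replacing $X$ by $\overline{X}\cap B$ in the leaf-block case (which is non-trivial only because every non-trivial min-cut of a simple graph has at least $\delta\geq\lambda\geq 3$ vertices --- this is where simplicity and $\lambda\neq 0,2$ actually enter, via Lemma~\ref{lem:expand}), and by $(X\cup C_1)-\bigcup_{i\neq 1}C_i$ in the higher-degree case. Saying the hypothesis $\lambda\neq 0,2$ ``excludes degenerate uncrossing outcomes'' names the difficulty without resolving it.

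There is a second, smaller gap in the running-time accounting. Your amortization charges one max-flow per successful split, but you also pay for every pair you \emph{test} that does not yield a split, and condition~(ii) quantifies over all pairs inside every block. Without an argument that $O(n)$ tests suffice, verifying that no further augmentation is needed costs $\Theta(n^2)$ flows. The paper avoids this by ordering the vertices of each block and testing only consecutive pairs, using the transitivity $\lambda^*(s,t)\geq\min\{\lambda^*(s,u),\lambda^*(u,t)\}$ of the non-trivial connectivity $\lambda^*$ (Lemma~\ref{Alem:triangle}) to propagate negative answers; also, deciding whether a non-trivial min-cut separates a given pair at all requires the Ball--Provan/residual-network structure of all min-cuts, not a single max-flow. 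The Gomory--Hu preprocessing you propose is harmless but buys nothing: the cuts it contributes are a subset of what the greedy refinement finds anyway, and the hard cases remain untouched.
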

	
	To proof of the improved sparsification need the following lemmas, which essentially show that the blocks of a non-trivial min-cut tree have average size $\Omega(\delta)$. We refer to Appendix~\ref{sec:appcbs} for proofs.
	
	\begin{lemma}\label{lem:leafn}
		Let $T$ be a non-trivial min-cut tree of a simple graph $G$. Then every leaf block $A$ of $T$ satisfies $|A| \geq \delta(G)$.
	\end{lemma}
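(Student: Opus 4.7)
My plan is to exploit that a leaf block $A$ with unique tree-neighbor $B$ has $C_{AB} = A$, so by Condition~(i) of the definition $A$ itself is a non-trivial min-cut of $G$. This immediately gives two key facts: $d_G(A) = \lambda$ and $|A| \geq 2$ (the latter because a non-trivial cut must satisfy $|A| \geq 2$ and $|\overline{A}| \geq 2$). These two facts are the only structural inputs I will need; the rest is a double-counting argument on the simple graph $G$.

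Next, I would bound the degree sum inside $A$ in two ways. On the one hand, every vertex of $G$ has degree at least $\delta$, so $\sum_{v \in A} d_G(v) \geq |A|\,\delta$. On the other hand, the same sum equals $2 |E(G[A])| + d_G(A)$; since $G$ is simple we have $2|E(G[A])| \leq |A|(|A|-1)$, and we already know $d_G(A) = \lambda$. Combining these yields the inequality $|A|\,\delta \leq |A|(|A|-1) + \lambda$. Using the standard bound $\lambda \leq \delta$ and rearranging gives $(|A|-1)\,\delta \leq |A|(|A|-1)$. Because $|A| \geq 2$, I can divide by the positive quantity $|A|-1$ to conclude $\delta \leq |A|$, which is exactly the desired inequality.

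The only delicate point, and the reason the argument works at all, is that $|A| \geq 2$; without this the final division would be illegal and the bound $|A|\geq\delta$ would fail on trivial singleton leaves. Non-triviality of the cut $C_{AB}$ forced by Condition~(i) supplies this for free, so no further obstacles arise. I would present the argument in exactly the order above: identify $A$ as a non-trivial min-cut, double count degrees, then apply $\lambda \leq \delta$ and divide.
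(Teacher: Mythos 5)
Your proof is correct and is essentially the paper's argument: the paper also observes that a leaf block $A$ equals $C_{AB}$ and is therefore a non-trivial min-cut by Condition~(i), and then applies the same degree double-count (Lemma~\ref{lem:expand}: $\delta \geq \lambda \geq \sum_{v\in A}(d(v)-(|A|-1)) \geq |A|\delta - |A|(|A|-1)$, with $|A|>1$ from non-triviality licensing the division). No differences worth noting.
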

		
	\begin{lemma}\label{lem:bignonsingletonsn}
		Let $T$ be a non-trivial min-cut tree of a simple graph $G$. Let $A'A$, $AB$, $BB'$ be edges in $T$ such that $A,B \in V_2$. If $|A|+|B|>2$, $|A| + |B| \geq \delta(G)/2$.
	\end{lemma}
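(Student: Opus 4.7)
The plan is to bound the cut across $A\cup B$ from above using the three min-cut conditions supplied by the tree, and then combine this with the minimum-degree condition and simplicity of $G$ in a single degree-sum over $A\cup B$.

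Since $A,B\in V_2$ and $A'A$, $AB$, $BB'$ are three distinct edges of $T$, the $T$-neighbors of $A$ are exactly $A'$ and $B$, and those of $B$ are exactly $A$ and $B'$; in particular $A'\neq B'$, since otherwise these three edges would form a cycle in $T$. Let $V_P$ denote the union of the blocks in the component of $T-A'A$ containing $A'$, and $V_Q$ the union of the blocks in the component of $T-BB'$ containing $B'$. Then $V_P,A,B,V_Q$ is a disjoint partition of $V(G)$ with both $V_P$ and $V_Q$ non-empty, and Condition~(i) of the non-trivial min-cut tree gives $d_G(V_P)=d_G(V_Q)=\lambda$.

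Set $\gamma:=d_G(V_P,V_Q)$. Since $V_P$ and $V_Q$ are disjoint, one has $d_G(V_P\cup V_Q)=d_G(V_P)+d_G(V_Q)-2\gamma$, and combined with $A\cup B=V(G)\setminus(V_P\cup V_Q)$ this yields $d_G(A\cup B)=2\lambda-2\gamma\leq 2\lambda\leq 2\delta$. Write $n_0:=|A|+|B|$, so $n_0\geq 3$ by hypothesis. Summing degrees over $A\cup B$ and bounding the number of edges with both endpoints in $A\cup B$ by $\binom{n_0}{2}$ (using simplicity of $G$) gives
\[ n_0\,\delta \;\leq\; \sum_{v\in A\cup B} d_G(v) \;\leq\; 2\binom{n_0}{2}+d_G(A\cup B) \;\leq\; n_0(n_0-1)+2\delta. \]
Rearranging yields $(n_0-2)\delta\leq n_0(n_0-1)$, and the elementary inequality $n_0(n_0-1)/(n_0-2)\leq 2n_0$, which is equivalent to $n_0\geq 3$, then delivers $\delta\leq 2n_0$, i.e., $|A|+|B|\geq\delta/2$.

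I do not anticipate a genuine obstacle here: the only delicate point is the cut identity $d_G(A\cup B)=2\lambda-2\gamma$, which hinges on $V_P$ and $V_Q$ being disjoint non-empty min-cuts sitting on opposite sides of the 2-path through $A$ and $B$; the remainder is a short degree count followed by an algebraic rearrangement.
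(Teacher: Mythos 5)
Your proof is correct and follows essentially the same route as the paper's: both arguments bound the cut around $A\cup B$ by $2\lambda\le 2\delta$ using the two min-cuts $C_{A'A}$ and $C_{B'B}$, compare this with a degree sum over $A\cup B$ using simplicity to bound the internal edges by $\binom{|A|+|B|}{2}$, and finish with the same algebraic rearrangement $(|A|+|B|-2)\delta\le(|A|+|B|)(|A|+|B|-1)$.
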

		
	\begin{lemma}\label{lem:rstarn}
		Let $T$ be a non-trivial min-cut tree of a simple graph $G$. Let $A$ be a block in $V_r$ with neighborhood $B_1,\dots,B_r\in V_2$ in $T$ such that $|A|=|B_1|= \dots = |B_r| = 1$. Then $\delta(G) \leq r^2 + r$.
	\end{lemma}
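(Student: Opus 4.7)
The plan is to bound the size of the cut $X := A \cup B_1 \cup \cdots \cup B_r = \{a, b_1, \ldots, b_r\}$ from two sides --- one using the minimum degree together with the simplicity of $G$, the other using the tree structure of $T$ --- and then compare. Here $a$ and $b_i$ denote the unique vertices of $A$ and $B_i$, respectively.

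For the lower bound, every vertex of $X$ has degree at least $\delta$ in $G$, so $\sum_{v \in X} d_G(v) \geq (r+1)\delta$. Since $G$ is simple, $G[X]$ contains at most $\binom{r+1}{2}$ edges, and each such edge contributes $2$ to the degree sum. Therefore
\[ d_G(X) \;=\; \sum_{v \in X} d_G(v) \;-\; 2|E(G[X])| \;\geq\; (r+1)\delta - r(r+1) \;=\; (r+1)(\delta - r). \]

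For the upper bound, I exploit the tree structure. Since each $B_i \in V_2$, it has exactly one neighbor $B_i' \neq A$ in $T$; let $S_i$ denote the union of the blocks lying in the component of $T - B_iB_i'$ that contains $B_i'$. Then $S_1, \ldots, S_r$ partition $V \setminus X$, and applying Condition~(i) of the non-trivial min-cut tree to the edge $B_iB_i'$ yields $d_G(S_i) = d_G(\overline{C_{B_iB_i'}}) = \lambda$. Each edge leaving $X$ appears exactly once in $\sum_i d_G(S_i)$, while each edge between distinct sets $S_i, S_j$ appears exactly twice, so
\[ d_G(X) \;=\; \sum_{i=1}^{r} d_G(S_i) \;-\; 2 \sum_{i<j} d_G(S_i, S_j) \;\leq\; r\lambda \;\leq\; r\delta. \]

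Combining the two bounds gives $(r+1)(\delta - r) \leq r\delta$, which rearranges immediately to $\delta \leq r(r+1) = r^2 + r$. The main conceptual step --- and what I expect to be the main obstacle --- is recognizing that $X$ is precisely the complement of the $r$ subtrees hanging off the $B_i'$'s, so that the $r$ non-trivial min-cuts provided by the tree edges $B_iB_i'$ collectively furnish the upper bound $d_G(X) \leq r\lambda$; once this picture is in place, simplicity of $G$ together with elementary double-counting finishes the argument.
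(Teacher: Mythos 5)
Your proof is correct and follows essentially the same counting as the paper's: both bound the set of edges between the star $\{v_A,v_1,\dots,v_r\}$ and the hanging subtrees $C_{B'_iB_i}$ from above by $r\lambda \le r\delta$ (via Condition~(i) applied to the tree edges $B_iB'_i$, discarding cross-edges) and from below by $(r+1)\delta$ minus the at most $\binom{r+1}{2}$ internal edges guaranteed by simplicity. The only difference is presentational: you package the argument as a single double-count of $d_G(X)$ for $X = A\cup B_1\cup\dots\cup B_r$, whereas the paper sums the corresponding per-vertex and per-cut inequalities, but the resulting bound $\delta \le r^2+r$ is obtained identically.
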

	
	Now the main theorem for non-trivial min-cut trees states the following.
	
	\begin{theorem}\label{thm:contractionn}
		Let $G$ be a simple graph with $\delta > 0$. If $T$ is a non-trivial min-cut tree of $G$, then $O(n/\delta)$ vertices and $O(n)$ edges are left if all blocks of $T$ are contracted.
	\end{theorem}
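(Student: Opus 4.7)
The plan is to first prove $|V(T)| = O(n/\delta)$; the edge bound then follows quickly, since by Condition~(i) every tree edge $AB$ satisfies $c(AB) = \lambda \leq \delta$ (as $\lambda(G)$ is bounded by the minimum degree), and every edge of $G$ with endpoints in different blocks crosses at least one tree edge of $T$. Thus the number of edges surviving the contraction is at most $\sum_{e \in E(T)} c(e) = \lambda(|V(T)| - 1) \leq \delta \cdot O(n/\delta) = O(n)$.

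To bound $|V(T)|$, I first observe that when $\delta \leq 6$ the trivial bound $|V(T)| \leq n$ already gives $|V(T)| = O(n/\delta)$, so I may assume $\delta > 6$. Partitioning $V(T) = V_1 \cup V_2 \cup V_{>2}$, Lemma~\ref{lem:leafn} yields $|V_1| \leq n/\delta$, and Lemma~\ref{lem:leafbound} then gives $|V_{>2}| \leq |V_1| - 2 = O(n/\delta)$. The main task is to bound $|V_2|$, which I plan to handle 2-path by 2-path.

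Consider a 2-path $A_1 A_2 \cdots A_k$ of $T$, and call a consecutive pair $(A_i, A_{i+1})$ \emph{big} if at least one of the two blocks has size $\geq 2$, and \emph{small} otherwise. Lemma~\ref{lem:bignonsingletonsn} applies to every such pair (each block on the 2-path has another neighbor in $T$), so every big pair satisfies $|A_i| + |A_{i+1}| \geq \delta/2$. Setting $S := \sum_i |A_i|$ and summing gives $2S \geq \sum_{i=1}^{k-1}(|A_i|+|A_{i+1}|) \geq (\text{big pairs}) \cdot \delta/2$, so the number of big pairs is at most $4S/\delta$. Next, Lemma~\ref{lem:rstarn} applied with $r = 2$ to any interior $V_2$-block forbids three consecutive singletons once $\delta > 6$, so no two consecutive pairs along the 2-path can both be small; hence the number of small pairs exceeds the number of big pairs by at most one. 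Combining, $k - 1 \leq 2 \cdot 4S/\delta + 1$, so $k = O(S/\delta) + O(1)$. Summing over all 2-paths, whose total block size is at most $n$ and whose count is $O(|V_1| + |V_{>2}|) = O(n/\delta)$ (since each 2-path is anchored by two blocks of $V_1 \cup V_{>2}$), yields $|V_2| = O(n/\delta)$, finishing the vertex bound.

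The main obstacle lies in the interplay between Lemmas~\ref{lem:bignonsingletonsn} and~\ref{lem:rstarn} along a 2-path: the former gives strong size lower bounds but only for non-singleton pairs, while the latter merely rules out long stretches of singletons. The small/big pair dichotomy above is designed to combine them cleanly, with the mild restriction $\delta > 6$ being absorbed by the trivial small-$\delta$ regime.
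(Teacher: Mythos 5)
Your proposal is correct and follows essentially the same route as the paper: both arguments rest on Lemma~\ref{lem:leafn} for leaf blocks, Lemma~\ref{lem:leafbound} to charge $V_{>2}$ and $V_2^{out}$ to $V_1$, Lemma~\ref{lem:rstarn} with $r=2$ to exclude three consecutive singletons on a 2-path once $\delta\geq 7$, and Lemma~\ref{lem:bignonsingletonsn} to extract $\Omega(\delta)$ vertices from each non-singleton adjacent pair. The only difference is bookkeeping inside the 2-paths (your big/small pair count versus the paper's charging of $\Omega(\delta)$ to each block of $V_2^{in}$), and the edge bound is identical.
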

	\begin{proof}
		We can assume $\delta \geq 7$ and $|V(T)|>1$, as otherwise there are obviouisly at most $O(n/\delta)$ vertices left after the contractions of $V(T)$. In particular, we have $V_0 = \emptyset$. Since $\delta \geq 7$, Lemma~\ref{lem:rstarn} implies that there are no distinct blocks $B_1, B_2, B_3 \in V_2$ satisfying $B_1B_2, B_2B_3 \in E(T)$. We conclude by Lemma~\ref{lem:bignonsingletonsn} that, for every 2-path $P$ with $|V(P)| \geq 3$, $\sum_{S\in V(P)} |S| = |V_2^{in}\cap V(P)| \cdot \Omega(\delta)$. Now we consider the number of vertices of $G$,
		\begin{align*}
		n &=\sum_{S\in V(T)} |S|\\
		&= \sum_{S\in V_1\cup V_{>2}} |S| +\sum_{\textrm{2-path }P}\left(\sum_{S\in V(P)}|S|\right)\\
		&\geq |V_1|\cdot \Omega(\delta)+\sum_{\textrm{2-path }P, |V(P)| \geq 3}\left(|V_2^{in}\cap V(P)| \cdot \Omega(\delta)\right) \tag{by Lemma~\ref{lem:leafn}}\\
		&= (|V_1|+|V_2^{out}|+|V_{>2}|)\cdot \Omega(\delta) + |V_2^{in}| \cdot \Omega(\delta) \tag{by Lemma~\ref{lem:leafbound}}\\
		&= |V(T)| \cdot \Omega(\delta).
		\end{align*}
		Therefore, $|V(T)| = O(n/\delta)$ many vertices and at most $(|V(T)| - 1) \cdot \lambda = O(n \lambda/\delta) \leq O(n)$ many edges will be left if all blocks of $T$ are contracted.
	\end{proof}
	
	Since contractions do not decrease the edge-connectivity and all non-trivial minimum cuts are preserved when the blocks of a non-trivial min-cut tree are contracted, the number non-trivial min-cuts in $G$ is bounded above by the number of min-cuts in the graph obtained by these contractions. Hence, by Theorem~\ref{thm:contractionn} and a well-known result in \cite{Dinits1976}, which states that the number of minimum cuts in any connected graph $H$ is at most $O(|V(H)|^2)$, we have the following result.
	
	\begin{theorem}\label{thm:numbermincutsn}
		Every simple graph $G$ with $\lambda \neq 0, 2$ has $O((n/\delta)^2)$ many non-trivial minimum cuts.
	\end{theorem}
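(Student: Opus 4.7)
The plan is to reduce the counting to minimum cuts in the sparsified graph produced by Theorem~\ref{thm:contractionn}. Since $\lambda(G) \neq 0, 2$, Proposition~\ref{prop:computationn} yields a non-trivial min-cut tree $T$ of $G$; I would contract every block of $T$ to obtain a graph $G'$ on $|V(G')| = O(n/\delta)$ vertices by Theorem~\ref{thm:contractionn}. Because contractions do not decrease edge-connectivity, $\lambda(G') \geq \lambda(G) \geq 1$, and in particular $G'$ is connected.

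The next step, and the real content of the proof, is to exhibit an injection from the non-trivial min-cuts of $G$ into the min-cuts of $G'$. First I would show that every non-trivial min-cut $X$ of $G$ is a union of blocks of $T$: if some block $S$ met both $X$ and $V \setminus X$, then picking $a \in S \cap X$ and $b \in S \setminus X$ gives a pair separated by a non-trivial min-cut, so condition (ii) of a non-trivial min-cut tree would deliver an edge $AB \in E(T)$ with $C_{AB}$ separating $a$ and $b$, contradicting the fact that $S$ lies entirely in a single component of $T-AB$. Consequently $X$ descends to a subset $X'$ of $V(G')$ satisfying $d_{G'}(X') = d_G(X) = \lambda \leq \lambda(G')$, so $X'$ is a minimum cut of $G'$; distinct non-trivial min-cuts of $G$ yield distinct $X'$ since they already differed as vertex sets and both are unions of the same partition blocks.

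Finally I would invoke the classical result of~\cite{Dinits1976} that any connected graph $H$ has at most $O(|V(H)|^2)$ minimum cuts, which produces at most $O((n/\delta)^2)$ minimum cuts in $G'$ and hence the same bound on non-trivial min-cuts of $G$. The heavy lifting is front-loaded in Theorem~\ref{thm:contractionn}, so no step is a genuine obstacle; the only delicate point is the block-union argument, which uses property~(ii) of the non-trivial min-cut tree exactly once.
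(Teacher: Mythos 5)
Your proposal is correct and follows essentially the same route as the paper: contract the blocks of a non-trivial min-cut tree, note that contractions do not decrease edge-connectivity and that Condition~(ii) forces every non-trivial min-cut to be a union of blocks (hence to survive the contraction as a min-cut of the contracted graph), and then apply Theorem~\ref{thm:contractionn} together with the $O(|V(H)|^2)$ bound of~\cite{Dinits1976}. The only difference is that you spell out the block-union injection explicitly, a detail the paper delegates to the remark that Condition~(ii) implies preservation of all non-trivial min-cuts under contraction.
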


	\subsection[Preserving Cuts of Size less than delta]{Preserving Cuts of Size less than $\boldsymbol{\delta}$}\label{sec:ec-tree}
	To preserve all cuts of size less than $\delta$, we consider a second cut tree structure defined as follows.
	
	\begin{definition}
		A \emph{$k$-edge-connectivity tree} $T$ of a graph $G=(V,E)$ is a tree whose vertex set partitions $V$ such that
		\begin{compactitem}
			\item[(i)] every two distinct vertices in a common block of this partition are $k$-edge-connected, and
			\item[(ii)] for every edge $AB \in T$, there are vertices $a^\ast \in A$ and $b^\ast \in B$ such that $c(AB) = \lambda_G(a^\ast,b^\ast) < k$.
		\end{compactitem}
	\end{definition}
	
	Thus, the blocks of any $k$-edge-connectivity tree are exactly the $k$-edge-connected components. We will give only a brief account of this tree structure, as we will give a stronger tree structure in Section~\ref{sec:LBPP} whose results (see Theorem~\ref{thm:independent}) will imply the following theorem; we refer to Section~\ref{sec:PTrelax} for the interplay between these two tree structures.
	
	\begin{theorem}\label{thm:fewcomponents}
		Let $G$ be a simple graph with $\delta > 0$. Let $T$ be a $\delta(G)$-edge-connectivity tree. Then $T$ has at most $O(n/\delta)$ blocks. In other words, every simple graph $G$ with $\delta > 0$ has $O(n/\delta)$ many $\delta(G)$-edge-connected components.
	\end{theorem}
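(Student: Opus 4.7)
The plan is to mimic the proof of Theorem~\ref{thm:contractionn} by showing that the blocks of $T$ have average size $\Omega(\delta)$; since $\sum_{A \in V(T)} |A| = n$, this immediately yields $|V(T)| = O(n/\delta)$, which is the claim. Three ingredients are needed, directly analogous to Lemmas~\ref{lem:leafn}, \ref{lem:bignonsingletonsn} and~\ref{lem:rstarn}, after which Lemma~\ref{lem:leafbound} assembles them. We may assume $\delta$ is at least a sufficiently large constant and $|V(T)| > 1$, otherwise the claim is trivial.

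The leaf ingredient is almost immediate. If $A \in V_1$ has tree-neighbor $B$, then by condition~(ii) of the definition $d(A) = c(AB) < \delta$. Combining this with $\sum_{v \in A} d_G(v) \geq |A|\delta$ and the simplicity bound $\sum_{v \in A} d_G(v) = 2e(A) + d(A) \leq |A|(|A|-1) + (\delta - 1)$ yields $|A|(\delta - |A| + 1) < \delta$, which forces $|A| \geq \delta + 1$ as long as $\delta \geq 2$.

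The 2-path ingredient is the main obstacle. We need analogues of Lemma~\ref{lem:bignonsingletonsn} (two consecutive 2-path blocks with total size exceeding $2$ have combined size $\Omega(\delta)$) and of Lemma~\ref{lem:rstarn} (a singleton star configuration inside $V_2$ forces $\delta$ to be a constant), both stated now for the weaker cut-size bound $c(AB) < \delta$ rather than $c(AB) = \lambda$. The difficulty is that the cuts $c(AB) < \delta$ are in general not globally minimum, so Lemma~\ref{lem:strongernc} does not apply and the classical uncrossing technique must be replaced by an analysis that exploits the nested chain of small cuts along a 2-path, together with submodularity and the simple-graph degree bound $d_G(v) \geq \delta$. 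The paper sidesteps this work by developing the stronger pendant-pair tree in Section~\ref{sec:LBPP}, whose Theorem~\ref{thm:independent} implies the statement directly; otherwise one can replay the proofs of Lemmas~\ref{lem:bignonsingletonsn} and~\ref{lem:rstarn} with ``$=\lambda$'' replaced by ``$<\delta$'' throughout.

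With the three ingredients in hand, the final assembly is identical to the computation at the end of the proof of Theorem~\ref{thm:contractionn}: split $V(T) = V_1 \cup V_{>2} \cup V_2^{out} \cup V_2^{in}$; the leaves contribute $|V_1| \cdot \Omega(\delta)$ to $n$ by the first ingredient; Lemma~\ref{lem:leafbound} gives $|V_{>2}|, |V_2^{out}| = O(|V_1|)$, absorbing these classes into the leaf contribution; and the 2-path ingredient contributes $|V_2^{in}| \cdot \Omega(\delta)$ summed across all 2-paths of length at least three. Adding everything gives $n \geq |V(T)| \cdot \Omega(\delta)$, so $T$ has $O(n/\delta)$ blocks, which are by definition the $\delta(G)$-edge-connected components.
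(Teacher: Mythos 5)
Your proof is correct, and it is essentially the route the paper gestures at but does not write out: the paper officially derives Theorem~\ref{thm:fewcomponents} from Theorem~\ref{thm:independent} via the observation (Section~\ref{sec:PTrelax}) that the pendant-tree partition refines the partition into $\delta$-edge-connected components, adding only the remark that the counting ``still holds without further ado'' for the $\delta$-edge-connectivity relation. You instead run the count directly on $T$, mimicking Theorem~\ref{thm:contractionn}. Two comments. First, your direct route is arguably cleaner here: the refinement route inherits the hypotheses $\delta\geq 5$ or $\lambda\geq 4$ or $\kappa\geq 3$ of Theorem~\ref{thm:independent} (which can fail for graphs with large $\delta$ but small $\lambda$ and $\kappa$, e.g.\ two large cliques joined by few edges), whereas in your setting the star configuration is excluded outright once $\delta$ exceeds a constant, and smaller $\delta$ is trivial as you note. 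Second, your worry about uncrossing and Lemma~\ref{lem:strongernc} is a red herring: that lemma is used only to \emph{construct} the trees, never in the counting lemmas, which rely solely on the cut-size bound and degree sums. The deferred replay does go through exactly as you claim: condition~(ii) gives $\sum_{v\in A\cup B} d(v,C_{A'A})\leq c(A'A)\leq\delta-1$ and likewise for $C_{B'B}$, so $2(\delta-1)\geq (p+q)(\delta-(p+q-1))$ yields $p+q\geq\delta-1$ when $p+q>2$ (this is verbatim the pendant-tree computation of Lemma~\ref{Alem:bignonsingletons} with $d(a_{max})-1$ replaced by $\delta-1$); and the star argument gives $\delta\leq d(v_A)\leq r^2$, so three consecutive singletons in a 2-path force $\delta\leq 4$. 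With those two facts plus your leaf bound $|A|\geq\delta+1$ and Lemma~\ref{lem:leafbound}, the assembly is exactly as in Theorem~\ref{thm:contractionn}. The only loose end is the existence of a $\delta$-edge-connectivity tree for the ``in other words'' rephrasing, which the paper supplies by remarking that the pendant-tree construction of Proposition~\ref{prop:computation} adapts directly.
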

	
	By Theorem~\ref{thm:fewcomponents} and the fact that every edge-cut that corresponds to an edge of the $\delta$-edge-connectivity tree has $O(\delta)$ edges, we have the following theorem (compare to the proof of Theorem~\ref{thm:contractionn}).
	
	\begin{theorem}\label{thm:contraction}
		Contracting every $\delta$-edge-connected component of a simple graph $G$ satisfying $\delta > 0$ leaves $O(n/\delta)$ vertices and $O(n)$ edges.
	\end{theorem}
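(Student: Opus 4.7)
The plan is to mirror the argument of Theorem~\ref{thm:contractionn}. Let $T$ be a $\delta$-edge-connectivity tree of $G$, which exists since its blocks are exactly the $\delta$-edge-connected components of $G$. After contracting every $\delta$-edge-connected component, the vertex set of the resulting graph is in bijection with $V(T)$, so Theorem~\ref{thm:fewcomponents} immediately yields the bound $|V(T)| = O(n/\delta)$ on the number of remaining vertices.

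For the edge count, I would invoke condition~(ii) of the definition of a $\delta$-edge-connectivity tree: for every tree edge $AB \in E(T)$ there exist $a^\ast \in A$ and $b^\ast \in B$ with $c(AB) = \lambda_G(a^\ast,b^\ast) < \delta$, so each cut $C_{AB}$ contains fewer than $\delta$ edges of $G$. The key combinatorial observation is then the following double-counting: any edge $uv \in E(G)$ with $u$ and $v$ lying in distinct blocks $X, Y \in V(T)$ belongs to the edge set of $C_{AB}$ for \emph{every} tree edge $AB$ on the $X$--$Y$ path in $T$, and that path has length at least one. Consequently, the number of edges of $G$ whose endpoints lie in different blocks (which is precisely the number of edges remaining after the contractions) is at most
\[
\sum_{AB \in E(T)} c(AB) \;<\; \delta \cdot (|V(T)|-1) \;=\; \delta \cdot O(n/\delta) \;=\; O(n).
\]

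There is essentially no obstacle here beyond having Theorem~\ref{thm:fewcomponents} in hand; the only delicate point is the double-counting step, which must be justified by the standard fact that in a tree whose vertices partition $V(G)$, an edge of $G$ joining two distinct blocks crosses each of the cuts associated with the tree edges on the unique path between those blocks. Once this is noted, the theorem follows from the two ingredients (the vertex bound from Theorem~\ref{thm:fewcomponents} and the per-edge bound $c(AB) < \delta$) in one line, exactly as the conclusion of Theorem~\ref{thm:contractionn} follows from its corresponding lemmas.
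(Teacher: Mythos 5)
Your proposal is correct and matches the paper's (largely implicit) argument: the vertex bound comes straight from Theorem~\ref{thm:fewcomponents}, and the edge bound comes from summing $c(AB) < \delta$ over the $|V(T)|-1$ tree edges, exactly as in the conclusion of the proof of Theorem~\ref{thm:contractionn}. Your explicit justification of the double-counting step (each surviving edge crosses at least one tree-edge cut) is the right way to make that one-liner rigorous.
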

	
	By the similar arguements for showing Theorem~\ref{thm:numbermincutsn}, we have the following implied by Theorem~\ref{thm:contraction}.

	\begin{theorem}\label{thm:numbermincuts}
		Every simple graph $G$ that satisfies $0 < \lambda(G) < \delta(G)$ has $O((n/\delta)^2)$ minimum cuts.
	\end{theorem}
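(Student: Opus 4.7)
The plan is to mirror the proof of Theorem~\ref{thm:numbermincutsn}, replacing the use of the non-trivial min-cut tree by the $\delta$-edge-connectivity tree and using the hypothesis $\lambda(G) < \delta(G)$ to ensure that all min-cuts survive the contraction.

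First I would fix a $\delta$-edge-connectivity tree $T$ of $G$ and let $G'$ be the multigraph obtained by contracting every block of $T$ (equivalently, every $\delta$-edge-connected component of $G$). By Theorem~\ref{thm:contraction}, $G'$ has $O(n/\delta)$ vertices and $O(n)$ edges. Since contractions cannot decrease the edge-connectivity, $\lambda(G') \geq \lambda(G)$.

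Next I would show that every minimum cut $X$ of $G$ is preserved in $G'$, i.e., no block of $T$ contains vertices from both $X$ and $\overline{X}$. Indeed, two vertices $u,v$ lying in a common block are $\delta$-edge-connected in $G$ by Condition~(i) of the definition of a $\delta$-edge-connectivity tree, so $\lambda_G(u,v) \geq \delta > \lambda = d_G(X)$; hence $X$ cannot be a $u$-$v$-cut. Thus $X$ descends to a cut $X'$ of $G'$ of size $d_{G'}(X') = d_G(X) = \lambda(G)$, and distinct min-cuts of $G$ give rise to distinct cuts of $G'$. Combined with $\lambda(G') \geq \lambda(G)$, it follows that $\lambda(G') = \lambda(G)$ and that each min-cut of $G$ maps injectively to a min-cut of $G'$.

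Finally, invoking the Dinits--Karzanov--Lomonosov bound \cite{Dinits1976} that a connected graph on $N$ vertices has $O(N^2)$ minimum cuts applied to $G'$ (which is connected since $\lambda(G') = \lambda(G) > 0$) yields at most $O(|V(G')|^2) = O((n/\delta)^2)$ minimum cuts in $G'$, and therefore in $G$. I don't foresee a real obstacle here; the only subtlety is verifying that min-cuts are preserved, which is exactly where the assumption $\lambda(G) < \delta(G)$ is used in an essential way.
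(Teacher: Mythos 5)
Your proposal is correct and matches the paper's argument: the paper also derives this theorem from Theorem~\ref{thm:contraction} by noting that contractions do not decrease edge-connectivity, that $\lambda < \delta$ guarantees every min-cut survives the contraction of the $\delta$-edge-connected components, and then applying the $O(N^2)$ bound of \cite{Dinits1976} to the contracted graph. The details you supply (injectivity of the map on min-cuts and $\lambda(G')=\lambda(G)$) are exactly the ones the paper leaves implicit.
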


	Combining Theorems~\ref{thm:numbermincutsn} and~\ref{thm:numbermincuts}, we have the following:
	
	\begin{theorem}\label{thm:numbernmincuts}
		Every connected simple graph $G$ has $O((n/\delta)^2)$ non-trivial min-cuts.
	\end{theorem}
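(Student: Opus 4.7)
The plan is to case-split on the edge-connectivity $\lambda := \lambda(G)$; connectedness of $G$ forces $\lambda \geq 1$, so only the values $\lambda = 1$ and $\lambda \geq 2$ need to be considered.

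The first case will be $\lambda \neq 2$. Then $\lambda \notin \{0,2\}$, so Theorem~\ref{thm:numbermincutsn} directly yields the desired bound $O((n/\delta)^2)$ on the number of non-trivial min-cuts, and nothing further is required.

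The second case is $\lambda = 2$. Because $\lambda \leq \delta$ in every graph, we have $\delta \geq 2$. When $\delta \geq 3$, we are in the regime $0 < \lambda < \delta$ required by Theorem~\ref{thm:numbermincuts}, which upper-bounds the total number of minimum cuts by $O((n/\delta)^2)$; the non-trivial min-cuts are a subset of these, so we are done. This leaves only the sub-case $\lambda = \delta = 2$, which I would dispose of by invoking the classical $O(n^2)$ bound of Dinits on the number of minimum cuts of any connected graph (the same bound already invoked in the proof of Theorem~\ref{thm:numbermincutsn}): since $\delta = 2$ is constant, $(n/\delta)^2 = \Theta(n^2)$, so $O(n^2) = O((n/\delta)^2)$ and the claim follows.

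The main obstacle is spotting that Theorems~\ref{thm:numbermincutsn} and~\ref{thm:numbermincuts} do not quite cover every connected simple graph: the former excludes $\lambda = 2$ and the latter requires $\lambda < \delta$, so the single regime $\lambda = \delta = 2$ slips through the gap. Fortunately, $\delta$ is bounded in that regime, so the target bound $O((n/\delta)^2)$ degenerates to the trivial $O(n^2)$ and is already implied by Dinits's inequality; no additional uncrossing or cactus argument is needed.
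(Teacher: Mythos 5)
Your proposal is correct and uses exactly the same three ingredients as the paper's proof (Theorem~\ref{thm:numbermincutsn} where $\lambda \neq 0,2$, Theorem~\ref{thm:numbermincuts} where $0 < \lambda < \delta$, and Dinits's $O(n^2)$ bound for the residual constant-$\delta$ case), merely partitioning the cases along slightly different but equivalent boundaries. You also correctly identified the key point, namely that the regime $\lambda = \delta = 2$ falls through the hypotheses of both theorems and must be caught by the trivial bound.
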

	\begin{proof}
		For $0 < \lambda = \delta \leq 2$, we apply the well-known result in \cite{Dinits1976}. For $\lambda = \delta > 2$, we apply Theorem~\ref{thm:numbermincutsn}. For $0 < \lambda < \delta$, we apply Theorem~\ref{thm:numbermincuts}.
	\end{proof}
	
	\subsection{Tightness}\label{sec:tightness1}
	We prove that the our results are tight. The following graph shows that the bounds of Theorems~\ref{thm:contractionn},~\ref{thm:fewcomponents} and~\ref{thm:contraction} (vertex- and edge-bounds) and of Theorems~\ref{thm:numbernmincuts} and~\ref{thm:numbermincuts}, are asymptotically tight. Let $n \geq 3(\delta+1)$, $\delta \geq 2$ and assume that $n$ is a multiple of $\delta+1$ (the last assumption can be avoided by a simple modification of the construction). Then the graph $G$ obtained from the cycle on $n/(\delta+1)$ vertices by replacing all vertices with a copy of $K_{\delta+1}$ shows tightness. Although this graph satisfies always $\lambda=2$, it can be readily generalized to graphs $G'$ having larger and even $\lambda$ without losing tightness (for $\lambda < \delta/2$). To do so, obtain $G'$ from $G$ by adding $\lambda/2-1$ cycles (each of size $n/(\delta+1)$) such that all these $\lambda/2$ cycles are vertex-disjoint and they visit the $n/(\delta+1)$ cliques $K_{\delta+1}$ in the same order.
	
	For Theorems~\ref{thm:numbernmincuts} and~\ref{thm:numbermincuts}, the assumption $\lambda \neq 0$ is (not only technically) necessary, as shown by the graph having $n$ isolated vertices, which has exponentially many non-trivial min-cuts. For Theorem~\ref{thm:numbermincuts}, the assumption $\lambda < \delta$ is necessary due to the complete graphs $K_n$.

	\section{Lower Bound for Pendant Pairs}\label{sec:LBPP}
	We introduce our third tree structure, called \emph{pendant tree}, in which all non-pendant pairs are covered by cuts. This tree will imply a new and tight lower bound on the number of pendant pairs. All omitted proofs of this section can be found in Appendix~\ref{sec:LBPPA}.
	
	\begin{definition}\label{def:pendant}
		A \emph{non-pendant-pair covering tree}, or simply \emph{pendant tree}, $T$ of a graph $G=(V,E)$ is a tree whose vertex set partitions $V$ such that
		\begin{compactitem}
			\item[(i)] every two distinct vertices in a common block of this partition are pendant,
			\item[(ii)] for every edge $AB \in T$, there are vertices $a \in A$ and $b \in B$ such that $\{a,b\}$ is non-pendant, and
			\item[(iii)] for every edge $AB \in T$, there are vertices $a^\ast \in A$ and $b^\ast \in B$ such that $c(AB) = \lambda_G(a^\ast,b^\ast)$.
		\end{compactitem}
	\end{definition}
	
	We first show for every edge $AB \in E(T)$ and every vertex $a_{max}$ of $A$ of maximum degree that $c(AB)$ cannot be too large. By definition, there are vertices $a \in A$ and $b \in B$ such that $\lambda(a,b) < \min\{d(a),d(b)\}$. Since $\{a,a_{max}\}$ and $\{b,b_{max}\}$ are pendant, a minimum $a$-$b$-cut can neither separate $a$ from $a_{max}$ nor $b$ from $b_{max}$. Hence, $\lambda(a_{max},b_{max}) \leq \lambda(a,b) < \min\{d(a),d(b)\} \leq \min\{d(a_{max}),d(b_{max})\}$. Now, let $a^\ast \in A$ and $b^\ast \in B$ be such that $c(AB) = \lambda(a^\ast,b^\ast)$ due to Condition~(iii). By transitivity of $\lambda$, we have $\lambda(a_{max},b_{max}) \geq \min\{\lambda(a_{max},a^\ast),\lambda(a^\ast,b^\ast),\lambda(b^\ast,b_{max})\} = \min\{d(a^\ast),\lambda(a^\ast,b^\ast),d(b^\ast)\} = c(AB)$, where the first equality follows from the fact that $\{a_{max},a^\ast\}$ and $\{b_{max},b^\ast\}$ are pendant. Therefore, $c(AB) < d(a_{max})$ and the following lemmas hold.
	
	\begin{lemma}\label{lem:representative}
		Let $AB$ be an edge of a pendant tree $T$. Then $\{a_{max},b_{max}\}$ is non-pendant.
	\end{lemma}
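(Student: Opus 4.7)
The plan is to formalize the sketch already given in the paragraph preceding the lemma statement; since that sketch is close to complete, no new ideas should be needed, and the task is mostly to lay out the two key inequalities carefully. I would choose $a_{max} \in A$ and $b_{max} \in B$ to be vertices of maximum degree in $A$ and $B$, pick $a \in A$, $b \in B$ witnessing non-pendancy via Condition~(ii) of Definition~\ref{def:pendant}, and aim to prove the single inequality $\lambda(a_{max}, b_{max}) < \min\{d(a_{max}), d(b_{max})\}$, which by definition is exactly what ``$\{a_{max}, b_{max}\}$ is non-pendant'' means.

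The central step is to show $\lambda(a_{max}, b_{max}) \leq \lambda(a, b)$. Since $a_{max}$ has maximum degree in $A$ we have $d(a) \leq d(a_{max})$, and by Condition~(i) the pair $\{a, a_{max}\}$ is pendant, giving $\lambda(a, a_{max}) = \min\{d(a), d(a_{max})\} = d(a)$; symmetrically $\lambda(b, b_{max}) = d(b)$. Take any minimum $a$-$b$-cut $X$. If $X$ were to separate $a$ from $a_{max}$, then $X$ would be an $a$-$a_{max}$-cut with $d(X) = \lambda(a,b) < d(a) = \lambda(a, a_{max})$, a contradiction; by the symmetric argument $X$ cannot separate $b$ from $b_{max}$ either. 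Hence $X$ is an $a_{max}$-$b_{max}$-cut and $\lambda(a_{max}, b_{max}) \leq d(X) = \lambda(a, b)$.

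Combining this with $\lambda(a, b) < \min\{d(a), d(b)\} \leq \min\{d(a_{max}), d(b_{max})\}$ (the second inequality by maximality of $a_{max}$ and $b_{max}$) yields the strict inequality $\lambda(a_{max}, b_{max}) < \min\{d(a_{max}), d(b_{max})\}$ and hence the lemma. The only point that could conceivably cause trouble is the ``$X$ cannot separate $a$ from $a_{max}$'' step, which I expect to be the main (very mild) obstacle: one must resist the temptation to invoke transitivity of $\lambda$ here (that gives the wrong direction of inequality) and instead use the direct cut-counting argument above, reserving transitivity for the companion statement $c(AB) \leq \lambda(a_{max}, b_{max})$ that is needed in subsequent lemmas.
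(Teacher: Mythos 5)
Your proposal is correct and follows the paper's own argument exactly: it fills in the justification for the paper's assertion that a minimum $a$-$b$-cut can separate neither $a$ from $a_{max}$ nor $b$ from $b_{max}$ (via the observation that such a cut would have size strictly less than $\lambda(a,a_{max})=d(a)$, respectively $\lambda(b,b_{max})=d(b)$), and then concludes as in the text. Your closing remark is also accurate — transitivity of $\lambda$ is only needed for the companion bound $c(AB)\leq\lambda(a_{max},b_{max})$ in Lemma~\ref{lem:maxdeg}, not here.
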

	
	\begin{lemma}\label{lem:maxdeg}
		Let $AB$ be an edge of a pendant tree $T$ and let $a_{max}$ be a vertex in $A$ of maximum degree. Then $c(AB) < d(a_{max})$.
	\end{lemma}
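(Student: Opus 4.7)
The plan is to derive the bound by connecting the witness pair guaranteed by Condition~(iii) of the pendant tree to the maximum-degree pair, using that vertices in a common block are pendant together with the ultrametric inequality $\lambda(x,z) \geq \min\{\lambda(x,y), \lambda(y,z)\}$ enjoyed by edge-connectivity. Since Lemma~\ref{lem:representative} already guarantees $\lambda(a_{max}, b_{max}) < \min\{d(a_{max}), d(b_{max})\} \leq d(a_{max})$, it suffices to prove $c(AB) \leq \lambda(a_{max}, b_{max})$; the target inequality then follows immediately.

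To establish that inequality, I would first invoke Condition~(iii) to pick $a^\ast \in A$ and $b^\ast \in B$ with $c(AB) = \lambda(a^\ast, b^\ast)$. Since $a^\ast$ and $a_{max}$ share the block $A$, Condition~(i) makes $\{a^\ast, a_{max}\}$ pendant; together with $d(a_{max}) \geq d(a^\ast)$ this forces $\lambda(a^\ast, a_{max}) = d(a^\ast)$, and symmetrically $\lambda(b^\ast, b_{max}) = d(b^\ast)$. Applying transitivity of $\lambda$ twice along the chain $a_{max}, a^\ast, b^\ast, b_{max}$ then yields
\[
\lambda(a_{max}, b_{max}) \;\geq\; \min\{d(a^\ast),\; \lambda(a^\ast, b^\ast),\; d(b^\ast)\}.
\]
Because $\lambda(a^\ast, b^\ast)$ is trivially bounded by both $d(a^\ast)$ and $d(b^\ast)$ (the singleton cuts $\{a^\ast\}$ and $\{b^\ast\}$ witness this), the right-hand side collapses to $\lambda(a^\ast, b^\ast) = c(AB)$, as required.

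The main obstacle is that Condition~(iii) only promises some witness pair $(a^\ast, b^\ast)$ realizing the cut value $c(AB)$, whereas the conclusion speaks about the a priori unrelated maximum-degree vertex $a_{max}$. The pendant property inside a block is exactly the right tool to bridge this gap: it converts the intra-block edge-connectivities into degree values, so that transitivity of $\lambda$ transports the value $c(AB)$ from $(a^\ast, b^\ast)$ up to $(a_{max}, b_{max})$ without loss. Once this transport is in place, combining it with Lemma~\ref{lem:representative} finishes the proof in one line.
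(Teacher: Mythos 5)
Your proof is correct and follows essentially the same route as the paper: both establish $c(AB) \leq \lambda(a_{max},b_{max})$ by taking the Condition~(iii) witnesses $a^\ast,b^\ast$, converting the intra-block connectivities to $d(a^\ast)$ and $d(b^\ast)$ via the pendant property, and applying transitivity of $\lambda$ along $a_{max},a^\ast,b^\ast,b_{max}$, then combine this with the non-pendancy of $\{a_{max},b_{max}\}$ from Lemma~\ref{lem:representative}. The only cosmetic difference is that you cite Lemma~\ref{lem:representative} for the upper bound $\lambda(a_{max},b_{max}) < d(a_{max})$, whereas the paper re-derives it inline from Condition~(ii); this is harmless since that lemma's proof does not depend on the present one.
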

	
	In order to compute a pendant tree of $G$, we may partition $V$ by applying Lemma~\ref{lem:noncrossing} iteratively to every non-pendant pair; however, this would result in a running time of $O(n^2 \theta_{flow})$. Instead, we show that a runtime of $O(n \theta_{flow})$ suffices.
	
	\begin{proposition}\label{prop:computation}
		A pendant tree $T$ of a graph $G$ can be computed in time $O(n \theta_{flow})$.
	\end{proposition}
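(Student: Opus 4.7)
The plan is to adapt the classical Gomory--Hu construction to the pendant tree setting: process the vertices of $V$ one by one in decreasing order of $G$-degree, performing at most one maximum-flow computation per vertex, so that the total running time is $O(n \theta_{flow})$.

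Order $V = \{v_1, \ldots, v_n\}$ so that $d(v_1) \geq \cdots \geq d(v_n)$, initialize $T$ as the single-block tree $\{V\}$, and designate $v_1$ as its representative. The key invariant I would maintain is that in every current block $A$ the designated representative $r_A$ has maximum $G$-degree in $A$. To process $v_i$ (for $i = 2, \ldots, n$), let $A$ be its current block and $r := r_A$; if $v_i = r$, do nothing. Otherwise contract every connected component of $T - A$ into a super-node to form the standard Gomory--Hu auxiliary graph and compute a minimum $r$-$v_i$-cut $X$ with $v_i \in X$; by the Gomory--Hu argument this cut has value $\lambda_G(r, v_i)$. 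If $\lambda_G(r, v_i) \geq d(v_i)$ then $\{r, v_i\}$ is pendant (using $d(r) \geq d(v_i)$ from the invariant) and I would do nothing. Otherwise split $A$ into $A - X$ and $A \cap X$, add a tree edge of weight $\lambda_G(r, v_i)$ between them, keep $r$ as the representative of $A - X$, designate $v_i$ as the representative of $A \cap X$, and re-route the incident tree edges in the standard Gomory--Hu fashion.

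The main obstacle in the analysis is proving that designating $v_i$ as representative of $A \cap X$ preserves the maximum-degree invariant; every not-yet-processed vertex has degree at most $d(v_i)$, so the critical case is a previously-processed vertex $u \in A \cap X$ with $u \neq v_i$. I would establish by induction on the time elapsed since $u$'s processing that $\{r_A, u\}$ is pendant and, in particular, $\lambda_G(r_A, u) = d(u)$. If $u$'s own processing had triggered a split, then $u$ would become the representative of its new block and, by the same inductive argument, remain the representative of every descendant block containing it, so $u$ would lie on the $r_A$-side of the current split, contradicting $u \in A \cap X$. Otherwise $u$'s pendancy test succeeded against the then-representative of its block, and the induction shows that $u$ stays on the rep-side of every intermediate split, so the representative of $u$'s block is preserved throughout and equals $r_A$. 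With $\lambda_G(r_A, u) = d(u)$ in hand, $u \in X$ would make $X$ an $r_A$-$u$ separator, forcing $\lambda_G(r_A, v_i) = |X| \geq d(u)$; combined with $\lambda_G(r_A, v_i) < d(v_i)$ this yields $d(u) < d(v_i)$, contradicting $d(u) \geq d(v_i)$.

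Once the invariant is secured, the three pendant tree properties follow. Property~(i) holds because the invariant and the pendancy tests imply $\lambda_G(r_A, u) = d(u)$ for every $u$ in every final block $A$, so the transitivity of pendancy through a maximum-degree middle vertex (exactly as in the paragraph preceding Lemma~\ref{lem:representative}) yields that all pairs within $A$ are pendant. Property~(ii) holds because every tree edge is created by a split and is witnessed by the non-pendant pair $(r_A, v_i)$, both of which remain in their respective blocks forever since a representative is always inherited by its side under subsequent splits. Property~(iii) is a standard consequence of the Gomory--Hu invariant, which guarantees that each tree edge has weight equal to $\lambda_G(a^*, b^*)$ for appropriate $a^* \in A$ and $b^* \in B$ correctly maintained under the re-routing rules. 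Since at most one max-flow computation is performed per vertex in $V - \{v_1\}$, the total running time is $O(n \theta_{flow})$.
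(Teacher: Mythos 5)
Your construction is essentially the paper's: iterative Gomory--Hu-style splitting with non-crossing minimum cuts, vertices processed in decreasing degree order, and a maximum-degree representative per block. Your invariant argument (a previously tested vertex $u$ can never land on the $v_i$-side of a new cut $X$, since $\lambda(r_A,u)=d(u)\geq d(v_i)>d_G(X)$) is sound, and Conditions~(i) and~(iii) do follow from it.

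The gap is in your justification of Condition~(ii). You argue that each tree edge is permanently certified by the non-pendant pair $(r_A, v_i)$ that created it "since a representative is always inherited by its side under subsequent splits." But what must be preserved is not that $r_A$ and $v_i$ each stay in \emph{some} block -- it is that the edge stays \emph{incident} to the blocks containing them, and the Gomory--Hu re-routing rule does not guarantee this. When the block $B_0\ni r_A$ is later split by a cut $Y$ into $B_{00}\ni r_A$ and $B_{01}\ni v_j$, the old edge $e=B_0B_1$ is re-attached to whichever part lies on the same side of $Y$ as $C_{B_1B_0}$; nothing forces that part to be $B_{00}$. If $e$ is re-attached to $B_{01}$, its new candidate witness pair is $(v_j, v_i)$ with $d(v_j)\leq d(v_i)$, and all your construction gives is $\lambda(v_j,v_i)\leq c(e)<d(v_i)$ -- which is compatible with $\lambda(v_j,v_i)=d(v_j)$, i.e.\ with $(v_j,v_i)$ being pendant. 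By Lemma~\ref{lem:representative}, if the maximum-degree representatives across an edge are pendant then \emph{no} non-pendant pair crosses that edge, so such an edge genuinely violates Condition~(ii) and cannot be repaired by choosing a different witness. The paper closes exactly this hole with a post-processing pass: first build a tree satisfying only (i) and (iii), then for each tree edge $AB$ test (one flow computation) whether $\{a_{max},b_{max}\}$ is pendant and, if so, contract $AB$; transitivity of $\lambda$ shows (i) and (iii) survive the contractions, and Lemma~\ref{lem:representative} shows this single test per edge suffices. Adding this pass costs only $O(n\theta_{flow})$ and would complete your proof; without it (or a genuine proof that the final representatives across every edge are non-pendant), the argument for (ii) does not go through.
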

	
	In particular, every graph has a pendant tree.

	\subsection{Large Blocks of Degree 1 and 2}\label{sec:largeblockPT}
	Following the line of arguments used for the tree structures of Section~\ref{sec:improvement}, we will prove that the leaf blocks of pendant trees, as well as the blocks that are contained in 2-paths, are of average size $\Omega(\delta)$.
	
	\begin{lemma}\label{lem:leaf}
		Every leaf block $A$ of a pendant tree $T$ in a simple graph $G$ satisfies $|A| > \delta(G)$.
	\end{lemma}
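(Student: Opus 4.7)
The plan is to combine Lemma~\ref{lem:maxdeg} with a standard handshake count inside $A$, using only the simplicity of $G$.

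First, I would observe that because $A$ is a leaf block of $T$ we have $C_{AB}=A$ and hence $c(AB)=d_G(A)$. Lemma~\ref{lem:maxdeg} then yields, for any vertex $a_{max}\in A$ of maximum degree in $A$, the strict inequality $d(a_{max})>d_G(A)$. This already rules out $|A|=1$, since a singleton block would satisfy $d_G(A)=d(a_{max})$.

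Next, I would estimate $\sum_{v\in A} d(v)$ in two ways. On one hand, $\sum_{v\in A} d(v) = 2|E(G[A])|+d_G(A)$; since $G$ is simple, $|E(G[A])|\leq \binom{|A|}{2}$, giving the upper bound $\sum_{v\in A} d(v) \leq |A|(|A|-1)+d_G(A)$. On the other hand, splitting off $a_{max}$ and using $d(v)\geq\delta(G)$ for the remaining vertices in $A$ together with $d(a_{max})>d_G(A)$ from Lemma~\ref{lem:maxdeg} yields the strict lower bound $\sum_{v\in A} d(v) > d_G(A)+(|A|-1)\delta(G)$.

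Combining the two estimates, cancelling $d_G(A)$, and factoring produces $(|A|-1)(|A|-\delta(G))>0$. Together with $|A|\geq 2$ this forces $|A|>\delta(G)$. I do not foresee a real obstacle: the genuine content is the strict degree bound from Lemma~\ref{lem:maxdeg}, and everything else is routine arithmetic. The only point of care is to preserve strictness when assembling the two estimates, so that the borderline value $|A|=\delta(G)$ is also excluded.
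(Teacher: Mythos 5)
Your argument is correct and is essentially the paper's own proof: the paper likewise combines Lemma~\ref{lem:maxdeg} ($\max_{v\in A}d(v) > c(AB)$) with the simple-graph degree count $c(AB)\geq\sum_{v\in A}(d(v)-(|A|-1))$ and lower-bounds the non-maximum degrees by $\delta$, arriving at the same factored inequality. The only difference is cosmetic bookkeeping (you isolate $2|E(G[A])|$ explicitly), so no further changes are needed.
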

	
	Let $a_{max}$ be a vertex of maximal degree in a leaf block $A$ with neighbor $B$ in $T$. Since $c(AB) < d(a_{max})$, $A$ must actually contain a vertex that has all its neighbors in $A$, as otherwise each neighbor $u$ of $a_{max}$ would contribute at least one edge to the edge-cut, by either the edge $a_{max} u$ or an incident edge of $u$. This gives the following corollary of Lemma~\ref{lem:leaf}, which was first shown by Mader.
	
	\begin{corollary}[\cite{Mader1974a}]\label{cor:Madervertex}
		Every leaf block $A$ of a pendant tree $T$ in a simple graph $G$ contains a vertex $v$ with $N(v) \subseteq A$. Hence, every pair in $\{v\} \cup N(v)$ is pendant.
	\end{corollary}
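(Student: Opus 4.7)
The argument follows the sketch given right before the statement, so the plan is essentially to formalize that sketch. I would proceed by contradiction: assume that every vertex $v \in A$ has at least one neighbor in $\overline{A}$. Since $A$ is a leaf block with unique tree neighbor $B$, the cut $E_G(A, \overline{A})$ has exactly $c(AB)$ edges. I would then pick $a_{max} \in A$ of maximum degree in $A$ (guaranteed by Lemma~\ref{lem:leaf}, which gives $|A| > \delta(G) \geq 1$) and apply Lemma~\ref{lem:maxdeg} to get $c(AB) < d(a_{max})$.

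The core step is to exhibit $d(a_{max})$ distinct edges in the cut $E_G(A, \overline{A})$, which will yield $c(AB) \geq d(a_{max})$ and the desired contradiction. For each neighbor $u$ of $a_{max}$, I assign a cut edge as follows: if $u \in \overline{A}$, take the edge $a_{max}u$; if $u \in A$, take any edge $u w_u$ with $w_u \in \overline{A}$ (which exists by the contradiction assumption). Here is where I expect the only mild technicality: I must verify these $d(a_{max})$ chosen edges are pairwise distinct. Edges of the first type all contain $a_{max} \in A$ and a distinct $\overline{A}$-endpoint, so they are mutually distinct. Edges of the second type have their $A$-endpoint in $A \setminus \{a_{max}\}$, so they are disjoint from the first type, and different choices of $u$ give different $A$-endpoints, hence different edges. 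Thus $c(AB) = |E_G(A, \overline{A})| \geq d(a_{max})$, contradicting Lemma~\ref{lem:maxdeg}.

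Hence there exists $v \in A$ with $N(v) \subseteq A$. For the second assertion, every two distinct vertices of $\{v\} \cup N(v)$ lie in the same block $A$ of the pendant tree $T$, so Condition~(i) of Definition~\ref{def:pendant} immediately implies that each such pair is pendant. The main (and essentially only) obstacle is the disjointness bookkeeping in the counting step; everything else is a direct invocation of Lemmas~\ref{lem:leaf} and~\ref{lem:maxdeg} together with the definition of a pendant tree.
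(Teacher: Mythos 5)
Your proposal is correct and is essentially the paper's own argument: the paper proves this corollary via the informal paragraph preceding it ("each neighbor $u$ of $a_{max}$ would contribute at least one edge to the edge-cut, by either the edge $a_{max}u$ or an incident edge of $u$"), which you have simply formalized, including the distinctness bookkeeping and the invocation of Lemma~\ref{lem:maxdeg}. The deduction that all pairs in $\{v\}\cup N(v)$ are pendant via Condition~(i) of Definition~\ref{def:pendant} also matches the paper's intent.
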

	
	This already implies that simple graphs contain $\binom{\delta+1}{2} = \Omega(\delta^2)$ pendant pairs. Note that Lemma~\ref{lem:leaf} and Corollary~\ref{cor:Madervertex} do not hold for graphs having parallel edges: for example, consider a block $A$ that consists of two vertices of degree $\delta$, which are joined by $\delta-1$ parallel edges. However, even if the graph is not simple, a leaf block $A$ must always contain at least two vertices due to Lemma~\ref{lem:maxdeg}.
	
	\begin{corollary}\label{cor:nosingletonleaf}
		Every leaf block of a pendant tree of a graph contains at least two vertices.
	\end{corollary}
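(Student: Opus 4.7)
My plan is a one-line contradiction argument leveraging Lemma~\ref{lem:maxdeg}. First I would dispose of the trivial case: if the pendant tree has only one block, it has no leaves at all, so there is nothing to prove and I may assume $|V(T)| \geq 2$.

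Next I would suppose, for contradiction, that some leaf block $A$ of $T$ is a singleton $\{a\}$, with unique neighbor $B$ in $T$. Since $|A|=1$, trivially the vertex $a$ itself realizes the maximum degree inside $A$, so $a_{max}=a$. Because $A$ is a leaf, the component of $T-AB$ containing $A$ consists of $A$ alone, which gives $C_{AB} = A = \{a\}$ and hence $c(AB) = d_G(C_{AB}) = d_G(a)$. Applying Lemma~\ref{lem:maxdeg} to the edge $AB$ then yields $c(AB) < d_G(a_{max}) = d_G(a)$, directly contradicting the previous equality. This forces $|A| \geq 2$.

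I expect no genuine obstacle: the corollary is really just the specialization of Lemma~\ref{lem:maxdeg} to the case where a leaf block would have size one. The conceptual point worth flagging, consistent with the discussion preceding the statement, is that the argument does not use any simplicity hypothesis on $G$ (Lemma~\ref{lem:maxdeg} was derived purely from the defining properties of a pendant tree via pendant-pair transitivity of $\lambda$). Consequently, this weaker leaf-block bound survives for multigraphs, even though the sharper bounds $|A| > \delta$ of Lemma~\ref{lem:leaf} and the inner-vertex statement of Corollary~\ref{cor:Madervertex} do not.
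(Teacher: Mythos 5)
Your argument is correct and is exactly the paper's intended one: the authors derive this corollary directly from Lemma~\ref{lem:maxdeg} by the same observation that a singleton leaf block $A=\{a\}$ would force $c(AB)=d_G(a)$, contradicting $c(AB)<d_G(a_{max})=d_G(a)$. Your side remark that no simplicity hypothesis is needed also matches the paper's own comment preceding the corollary.
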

	
	In simple graphs, we thus know that leaf blocks give us a large number of pendant pairs. Since $T$ is a tree, the number of leaf blocks is exactly determined by the number of blocks of size at least 3, namely $|V_1| = \sum_{A \in  V_{>2}} (d_T(A)-2) + 2$. Thus, in order to prove a better lower bound on the number of pendant pairs, we have to consider the case that there would be too many small blocks of size $o(\delta)$ in 2-paths. We can show that (i) for every two adjacent blocks $A$ and $B$ in a 2-path with $|A|+|B|>2$, we have $|A|+|B|\geq\delta-1=\Omega(\delta)$ and (ii) under certain assumptions, if $P$ is a subpath of a 2-path such that all blocks of $P$ are singletons, then $P$ contains at most two blocks. This implies that the bad situation of many small blocks of size $o(\delta)$ cannot occur and we can have that the blocks are of average size $\Omega(\delta)$. We have the following lemma.
	
	\begin{lemma}\label{lem:big2paths}
		Let $T$ be a pendant tree of a simple graph $G$ satisfying $\delta(G) \geq 5$ or $\lambda(G) \geq 4$ or $\kappa(G) \geq 3$. Let $P$ be a 2-path of $T$. Then $\sum_{S\in V(P)} |S| \geq (|V(P)|-2)\frac{\max\{4,\delta(G)\}}{3}+2$.
	\end{lemma}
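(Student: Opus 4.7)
The plan is to write the $2$-path as $P = S_1 S_2 \cdots S_k$ with $k := |V(P)|$, set $M := \max\{4, \delta(G)\}$, and decompose the sum as $\sum_{i=1}^k |S_i| = |S_1| + |S_k| + \sum_{i=2}^{k-1}|S_i|$. Since $|S_1|, |S_k| \geq 1$, the inequality reduces to showing that the $k-2$ interior blocks have average size at least $M/3$. This reduction absorbs the additive $+2$ on the right-hand side, so the whole game is to lower-bound the total mass of the interior.

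The proof then rests on two sub-claims. First I would establish (A): for any two adjacent blocks $A, B$ on a $2$-path with $|A|+|B| > 2$, one has $|A|+|B| \geq M$. The idea is that at least one of $A, B$ (say $A$) is non-singleton, hence by Condition~(i) of Definition~\ref{def:pendant} contains a pendant pair, forcing internal $\lambda$-values in $A$ to be at least $\delta(G)$; combining this with Lemma~\ref{lem:maxdeg} applied to the two tree-edges incident to $A$ and to $B$, together with a degree-counting argument on the edges crossing into $\overline{A \cup B}$, then yields $|A|+|B| \geq \delta(G)$, and a direct argument in the small-$\delta$ regime gives the weaker bound $|A|+|B| \geq 4$. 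Second, I would establish (B): under any of the three hypotheses $\delta \geq 5$, $\lambda \geq 4$, or $\kappa \geq 3$, no three consecutive blocks of $P$ are all singletons. Assuming towards contradiction that $\{a\}, \{b\}, \{c\}$ are three consecutive singleton blocks, Condition~(i) forces $\{a,b\}$ and $\{b,c\}$ to be pendant pairs, and Condition~(iii) identifies the cut values $c(\{a\}\{b\}), c(\{b\}\{c\})$ with the corresponding $\lambda$-values. Through uncrossing of minimum cuts (Lemmas~\ref{lem:noncrossing} and~\ref{lem:strongernc}) and case analysis tailored to each of the three connectivity hypotheses, I would derive either a pendant violation among $\{a,b,c\}$ or a violation of Lemma~\ref{lem:maxdeg}.

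With both sub-claims in hand, the aggregation is a charging argument: by (B) every interior singleton block is at tree-distance at most~$2$ along $P$ from some non-singleton block, and by (A) any non-singleton block together with either adjacent block on $P$ has combined size at least $M$. Distributing the mass of each non-singleton block over itself and up to two adjacent singleton neighbors (at most one on each side of $P$) gives an average of at least $M/3$ per interior block, yielding $\sum_{i=2}^{k-1}|S_i| \geq (k-2)M/3$ and completing the proof.

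The main obstacle will be sub-claim (B): the three alternative hypotheses $\delta(G) \geq 5$, $\lambda(G) \geq 4$, $\kappa(G) \geq 3$ each need to be handled by a tailored case analysis that rules out three consecutive singleton blocks, and the precise form of these hypotheses is presumably dictated by small counterexamples that arise in weaker regimes. By contrast, sub-claim (A) is a fairly direct application of the degree and cut inequalities implied by the pendant tree definition and Lemma~\ref{lem:maxdeg}, and the final aggregation step is essentially an elementary amortization.
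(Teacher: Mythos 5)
Your skeleton matches the paper's: the proof does rest on exactly your two sub-claims --- a lower bound for adjacent block pairs that are not both singletons (Lemma~\ref{Alem:bignonsingletons}) and the exclusion of three consecutive singletons on a 2-path under any of the three hypotheses (Corollary~\ref{Acor:innerblocks}) --- followed by an elementary amortization. The problem is that the crux is your sub-claim (B), and your proposal does not prove it; moreover, the sketch you give for it starts from a misreading of the definition. Adjacent singleton blocks $\{a\}$ and $\{b\}$ do \emph{not} form a pendant pair by Condition~(i); by Condition~(ii) the pair $\{a,b\}$ is \emph{non}-pendant, so $\lambda(a,b)<\min\{d(a),d(b)\}$, and Condition~(iii) then forces $c(\{a\}\{b\})=\lambda(a,b)$. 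The paper's argument for (B) is not an uncrossing case analysis but a direct degree count (Lemma~\ref{Alem:rstar}): if a singleton block $\{v_A\}$ of tree-degree $r$ has only singleton neighbors $\{v_1\},\dots,\{v_r\}$, all in $V_2$, then Lemma~\ref{lem:maxdeg} applied to the outer tree-edges gives $d(C_{B_i'B_i})\le d(v_i)-1$, and summing these against the trivial bounds $d(v_i)\le r+\sum_j d(v_i,C_{B_j'B_j})$ yields $d(v_A)\le r^2-2\gamma$; for $r=2$ this forces $\delta\le 4$ and $\lambda\le 3$, and a short extra argument gives $\kappa\le 2$, contradicting each hypothesis in turn. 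Without a computation of this kind your plan for (B) is a placeholder, and (B) is precisely where the hypotheses $\delta\ge5$, $\lambda\ge4$, $\kappa\ge3$ enter.

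Two smaller points. Your sub-claim (A) is stated too strongly: the degree-counting argument only yields $|A|+|B|\ge\delta-1$ when $|A|+|B|>2$, not $|A|+|B|\ge\max\{4,\delta\}$; the paper recovers $\max\{4,\delta\}$ only for a consecutive \emph{triple}, by adding the third block's size to the pair bound and using (B) to supply the value $4$. Finally, your charging scheme is fragile: an interior block of size $2$ both of whose neighbours on $P$ are non-singletons absorbs no singletons and receives no mass from anyone, so its group averages $2<\max\{4,\delta\}/3$ once $\delta\ge 7$, and nothing in (A) or (B) excludes this configuration. The paper instead sums the triple bound over all $|V(P)|-2$ overlapping consecutive triples and notes that each block is counted at most three times while the end blocks are undercounted; this avoids the partition issue entirely and also produces the exact additive $+2$ in the statement.
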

	
	By Lemma~\ref{lem:leaf} and Lemma~\ref{lem:big2paths}, we know that the sizes of leaf blocks and inner blocks of 2-paths are large enough for our need. Now, by Lemma~\ref{lem:leafbound}, the number of these large blocks is a constant fraction of the total number of blocks, which implies that all blocks are of average size $\Omega(\delta)$.

	\subsection[Many Pendant Pairs]{$\boldsymbol{\Omega(\delta n)}$ Many Pendant Pairs}
	We use the previous results on large blocks to obtain our main Theorems~\ref{thm:independent} and~\ref{thm:main} for this section. While the latter shows the existence of $\Omega(\delta n)$ pendant pairs, the former gives an upper bound on the number of blocks of pendant trees. We defer to Appendix~\ref{sec:LBPPA} for all proofs.
	
	\begin{theorem}\label{thm:independent}
		Let $G$ be a simple graph that satisfies $\delta(G) \geq 5$ or $\lambda(G) \geq 4$ or $\kappa(G) \geq 3$. Let $T$ be a pendant tree of $G$. Then $T$ has at most $\frac{12}{\delta + 12} n = O(n/\delta)$ blocks.
	\end{theorem}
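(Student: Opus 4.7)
The plan is to mimic the proof of Theorem~\ref{thm:contractionn}: lower-bound $n=\sum_{S\in V(T)}|S|$ by a weighted count over the four block types $V_1$, $V_{>2}$, $V_2^{out}$, $V_2^{in}$, and then combine this with the tree-counting inequalities of Lemma~\ref{lem:leafbound} to conclude. Note that the hypothesis ``$\delta\geq 5$ or $\lambda\geq 4$ or $\kappa\geq 3$'' already forces $\delta\geq 3$ (via the standard inequality $\kappa\leq\lambda\leq\delta$), and this will be exactly what is needed to make the arithmetic go through.

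First I would dispose of the case $|V(T)|=1$ using $n\geq\delta+1\geq (\delta+12)/12$, which is equivalent to $11\delta\geq 0$. Assuming $|V(T)|>1$, I obtain a per-block-type lower bound on $n$. Lemma~\ref{lem:leaf} gives $\sum_{S\in V_1}|S|\geq(\delta+1)|V_1|$; the trivial bound gives $\sum_{S\in V_{>2}}|S|\geq|V_{>2}|$. For each 2-path $P$ I apply Lemma~\ref{lem:big2paths} when $|V(P)|\geq 2$ and the trivial bound $|S|\geq 1$ when $|V(P)|=1$; a short case analysis shows that the resulting per-path bound can always be written as $\tfrac{\max\{4,\delta\}}{3}|V_2^{in}\cap V(P)|+|V_2^{out}\cap V(P)|$. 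Summing over all 2-paths yields
\[
n \;\geq\; (\delta+1)|V_1| \;+\; |V_{>2}| \;+\; |V_2^{out}| \;+\; \tfrac{\max\{4,\delta\}}{3}\,|V_2^{in}|.
\]

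The remainder is arithmetic. Multiplying through by $12$ and subtracting $(\delta+12)|V(T)|$, the target inequality $|V(T)|\leq \tfrac{12n}{\delta+12}$ reduces to
\[
11\delta\,|V_1| \;-\; \delta\,|V_{>2}| \;-\; \delta\,|V_2^{out}| \;+\; \bigl(4\max\{4,\delta\}-\delta-12\bigr)\,|V_2^{in}| \;\geq\; 0.
\]
The coefficient of $|V_2^{in}|$ is non-negative for $\delta\geq 3$ (checking separately the small cases $\delta=3,4$ where $\max\{4,\delta\}=4$), so this term can be dropped. Applying $|V_{>2}|\leq|V_1|-2$ and $|V_2^{out}|\leq 4|V_1|-6$ from Lemma~\ref{lem:leafbound} then leaves $6\delta\,|V_1|+8\delta\geq 0$, which is clearly true.

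The main obstacle is the translation of Lemma~\ref{lem:big2paths} from a per-path inequality (in which singleton 2-paths yield only a trivially weak bound) into a clean global inequality that cleanly separates the contributions of $V_2^{in}$ and $V_2^{out}$. This separation is essential for the final arithmetic, because $V_2^{out}$ is controlled by the leaves via Lemma~\ref{lem:leafbound} whereas $V_2^{in}$ must be absorbed by its own $\Omega(\delta)$ average size; without separating them, the target constant $\tfrac{12}{\delta+12}$ cannot be achieved. Once the clean form above is in place, the remaining computation is elementary linear algebra over $|V_1|,|V_{>2}|,|V_2^{out}|,|V_2^{in}|$.
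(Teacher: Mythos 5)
Your proof is correct and follows essentially the same route as the paper's: lower-bound the leaf blocks via Lemma~\ref{lem:leaf}, the 2-paths via Lemma~\ref{lem:big2paths}, control $|V_{>2}|$ and $|V_2^{out}|$ by $|V_1|$ via Lemma~\ref{lem:leafbound}, and finish with the same arithmetic (merely reorganized by multiplying through by $12$ rather than writing $n=|V(T)|+\sum(|S|-1)$). Your explicit handling of 2-paths with one or two blocks and the verification that the hypothesis forces $\delta\geq 3$ are both sound.
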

	
	\begin{theorem}\label{thm:main}
		Let $G$ be a simple graph that satisfies $\delta(G) \geq 5$ or $\lambda(G) \geq 4$ or $\kappa(G) \geq 3$. Then $G$ contains at least $\frac{1}{30}\delta n = \Omega(\delta n)$ pendant pairs.
	\end{theorem}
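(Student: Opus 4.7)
The plan is to reduce the count to an inequality about block sizes in a pendant tree and then exploit the upper bound on the number of blocks already granted by Theorem~\ref{thm:independent}. Concretely, I would take any pendant tree $T$ of $G$ (which exists by Proposition~\ref{prop:computation}) and let $k:=|V(T)|$. The crucial structural fact is Condition~(i) of Definition~\ref{def:pendant}: every two distinct vertices inside the same block $A$ of $T$ form a pendant pair. Consequently, the total number of pendant pairs in $G$ is bounded below by the combinatorial sum $\sum_{A\in V(T)}\binom{|A|}{2}$, and it is enough to show that this quantity is at least $\frac{1}{30}\delta n$.

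By Theorem~\ref{thm:independent}, the hypotheses $\delta\geq 5$ or $\lambda\geq 4$ or $\kappa\geq 3$ give $k\leq \frac{12}{\delta+12}n$; equivalently, the average block size satisfies $n/k\geq \frac{\delta+12}{12}$. Since $x\mapsto \binom{x}{2}$ is convex, Jensen's inequality yields
\[
\sum_{A\in V(T)}\binom{|A|}{2}\;\geq\; k\binom{n/k}{2}\;=\;\frac{n(n/k-1)}{2}\;\geq\;\frac{n}{2}\Bigl(\frac{\delta+12}{12}-1\Bigr)\;=\;\frac{\delta n}{24}\;\geq\;\frac{\delta n}{30},
\]
which is exactly the claim. (A small separate check handles the boundary cases where $\kappa\geq 3$ forces only $\delta\geq 3$: there $k\leq 4n/5$ already gives the bound $n/8\geq \delta n/30$, so the argument goes through under each of the three alternative hypotheses.)

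There is no serious obstacle left in this step: all the structural work has been done in Theorem~\ref{thm:independent}, whose proof in turn rests on Lemma~\ref{lem:leaf} (leaf blocks are large) and Lemma~\ref{lem:big2paths} (blocks on long 2-paths are large on average), combined with the tree-accounting inequalities of Lemma~\ref{lem:leafbound}. The only thing that might look suspicious is the implicit use of convexity when some blocks are singletons, but this is fine because Jensen's inequality does not require any lower bound on individual $|A|$; the bound $n/k\geq(\delta+12)/12>1$ coming from Theorem~\ref{thm:independent} already excludes the degenerate regime in which too many singleton blocks could drive $\sum\binom{|A|}{2}$ to zero.
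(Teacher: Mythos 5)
Your proof is correct, and it takes a genuinely different (and slicker) route than the paper's. You observe that Condition~(i) of Definition~\ref{def:pendant} makes every block internally pendant, so the count is at least $\sum_{A\in V(T)}\binom{|A|}{2}$, and then you let convexity do all the work: Jensen gives $\sum_{A}\binom{|A|}{2}\geq k\binom{n/k}{2}=\frac{n}{2}\bigl(\frac{n}{k}-1\bigr)$, and the explicit bound $k\leq\frac{12}{\delta+12}n$ of Theorem~\ref{thm:independent} turns this into $\frac{\delta n}{24}\geq\frac{\delta n}{30}$ --- in fact a slightly better constant than claimed. The paper instead re-runs the block-size analysis by hand: it chops each long 2-path into consecutive triples and picks from each a block of size at least $\max\{2,(\delta-1)/2\}$ (Corollary~\ref{Acor:innerblocks} and Lemma~\ref{Alem:bignonsingletons}), allocates the blocks of $V_2^{out}$ and $V_{>2}$ to leaf blocks via Lemma~\ref{lem:leafbound}, and sums $\binom{|S|}{2}$ only over these selected large representatives. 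Both arguments rest on the same structural lemmas, but yours buys brevity by reusing Theorem~\ref{thm:independent} as a black box (note it genuinely needs the concrete constant $\frac{12}{\delta+12}$ from that theorem's statement, not merely $|V(T)|=O(n/\delta)$), whereas the paper's grouping argument is self-contained modulo the lemmas. Two minor remarks: your parenthetical ``boundary case'' for $\kappa\geq 3$ with $\delta=3$ is redundant, since the main chain already yields $\frac{\delta n}{24}$ for every admissible $\delta$; and the degenerate case $|V(T)|=1$ (no non-pendant pair at all) is also subsumed by your inequality, which there reads $\binom{n}{2}\geq\frac{\delta n}{2}$.
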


	\subsection[delta-Edge-Connectivity Tree and Pendant Tree]{$\boldsymbol{\delta}$-Edge-Connectivity Tree and Pendant Tree}\label{sec:PTrelax}
	We explain more about the correlation between these two tree structures. Note that, by definition, every pendant pair of a graph $G$ is $\delta(G)$-edge-connected. Let $T$ be a $\delta$-edge-connectivity tree and $T'$ be a pendant tree of $G$. Since every block of $T'$ is $\delta$-edge-connected, it must be contained in some block of $T$. Hence, the vertex partition of every pendant tree \emph{refines} the partition of $V$ into $\delta$-edge-connected components. It is also not hard to see that, given a $\delta$-edge-connectivity tree $T$, there is a pendant tree $T'$, such that contracting all edges $e\in E(T')$ with $c(e)\geq\delta$ gives $T$. Hence, most of the results derived by exploiting pendant trees can be reformulated to statements about $\delta$-edge-connected pairs. In particular, Lemma~\ref{lem:leaf} gives the following corollary.
	
	\begin{corollary}\label{cor:edgeblocks}
		Every simple graph $G$ contains a set $S$ of at least $\delta(G)+1$ vertices such that $\lambda_G(v,w) \geq \delta(G)$ for every $v,w \in S$.
	\end{corollary}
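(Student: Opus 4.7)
The plan is to read the corollary off directly from the pendant tree machinery already developed, in particular from Lemma~\ref{lem:leaf} and the defining Condition~(i) of a pendant tree. The key point to observe is that pendancy of a pair $\{v,w\}$ immediately yields $\lambda_G(v,w) = \min\{d(v), d(w)\} \geq \delta(G)$, so it suffices to exhibit a set of $\delta(G)+1$ mutually pendant vertices.

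First I would dispose of the trivial case $\delta(G) = 0$, in which any singleton $S$ satisfies the condition vacuously. So assume $\delta := \delta(G) \geq 1$; then $n \geq \delta + 1$, since a vertex of minimum degree already has $\delta$ distinct neighbors. By Proposition~\ref{prop:computation}, the graph $G$ admits a pendant tree $T$.

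Next, split according to $|V(T)|$. If $|V(T)| = 1$, the unique block is all of $V(G)$; by Condition~(i) of Definition~\ref{def:pendant} every pair of distinct vertices is pendant, and since $|V(G)| = n \geq \delta + 1$, the set $S := V(G)$ works. If $|V(T)| \geq 2$, pick any leaf block $A$ of $T$. By Lemma~\ref{lem:leaf} we have $|A| > \delta$, hence $|A| \geq \delta + 1$, and by Condition~(i) every pair of distinct vertices of $A$ is pendant; take $S := A$.

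In either case, for every $v, w \in S$ with $v \neq w$ the pair $\{v,w\}$ is pendant, so
\[
\lambda_G(v,w) \;=\; \min\{d_G(v), d_G(w)\} \;\geq\; \delta(G),
\]
which is the claim. There is no real obstacle here: all the content has been packed into Lemma~\ref{lem:leaf}, and the only thing to watch out for is the degenerate tree case $|V(T)| = 1$, which is handled by the elementary bound $n \geq \delta + 1$.
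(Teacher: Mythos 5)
Your proof is correct and follows exactly the route the paper intends: the corollary is read off from Lemma~\ref{lem:leaf} by taking a leaf block of a pendant tree (whose existence is guaranteed by Proposition~\ref{prop:computation}) and using that mutually pendant vertices $v,w$ satisfy $\lambda_G(v,w)=\min\{d(v),d(w)\}\geq\delta(G)$. Your explicit handling of the degenerate cases $\delta(G)=0$ and $|V(T)|=1$ is a welcome bit of care that the paper leaves implicit.
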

	
	More generally, Theorems~\ref{thm:independent} still holds without further ado when we replace the binary relation pendant pair by the $\delta$-edge-connectivity relation on vertex pairs, that yields Theorem~\ref{thm:fewcomponents}.

	\subsection{Tightness}
	Corollary~\ref{cor:edgeblocks} is tight by the graph $G$ that was constructed in Section~\ref{sec:tightness1}. The bounds of Theorems~\ref{thm:independent} and~\ref{thm:main} are asymptotically tight by considering the unions of $\frac{n}{\delta+1}$ many disjoint cliques $K_{\delta+1}$.
	
	Each of the conditions $\delta \geq 5$, $\lambda \geq 4$ and $\kappa \geq 3$ in Theorems~\ref{thm:independent} and~\ref{thm:main} is tight, as the graph in Figure~\ref{fig:delta4} can be arbitrarily large and satisfies $\delta = 4$, $\lambda = 3$ and $\kappa = 2$, but has only a constant number of pendant pairs. Also the simpleness condition in both results is indispensable: Consider the path graph on $n$ vertices in which the two end edges have multiplicity $\delta$ and all other edges have multiplicity $\delta/2$. This graph has precisely $2$ pendant pairs, each at one of its ends.
	
	\begin{figure}[h!t]
		\centering
		\includegraphics[scale=0.7]{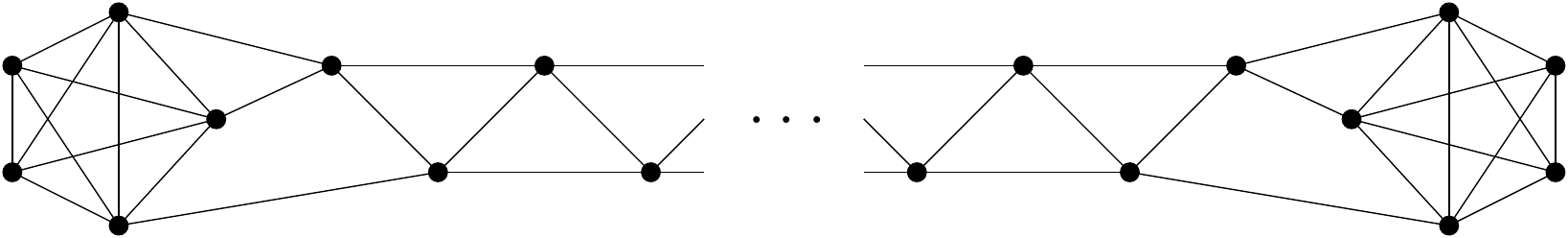}
		\caption{The \emph{bone graph} $G$, whose only pendant pairs are the ones contained in the two $K_5$ (those form the only leaf blocks of the pendant pair tree). Hence, $G$ has exactly $20$ pendant pairs.}
		\label{fig:delta4}
	\end{figure}
	
	\bibliographystyle{abbrv}
	\bibliography{lower_bound_of_pendant_pairs_ref}
	
	\newpage

	\appendix

	\section{Details and Proofs of Section~\ref{sec:improvement}}\label{sec:appcbs}
	Here we give the proofs omitted in Section~\ref{sec:improvement}. We first give the construction of the non-trivial min-cut tree.
	
	\begin{lemma}\label{lem:expand}
		Let $C \subset V(G)$ be a non-trivial min-cut of a simple graph $G$. Then $|C| \geq \delta(G)$.
	\end{lemma}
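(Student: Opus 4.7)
The plan is to argue by contradiction using a simple double-counting of edges crossing the cut, exploiting the fact that $G$ is simple so each vertex in $C$ has bounded internal degree.

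First I would dispose of the trivial cases. Since $C$ is non-trivial, $|C|\geq 2$ and $|\overline{C}|\geq 2$. If $\delta(G)\leq 2$, the inequality $|C|\geq 2\geq \delta(G)$ is immediate. So I may assume $\delta:=\delta(G)\geq 3$ and aim for a contradiction from the assumption $|C|\leq \delta-1$.

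Next, I would bound $d_G(C)$ from below. Because $G$ is simple, any vertex $v\in C$ has at most $|C|-1$ neighbors inside $C$, so it contributes at least $d_G(v)-(|C|-1)\geq \delta-|C|+1$ edges to $E_G(C,\overline{C})$. Summing over $v\in C$ yields
\[
d_G(C)\;\geq\; |C|\bigl(\delta-|C|+1\bigr).
\]
Set $k:=|C|$; the right-hand side is a concave quadratic in $k$ on the interval $2\leq k\leq \delta-1$, hence minimized at the endpoints, where it evaluates to $2(\delta-1)=2\delta-2$. Therefore $d_G(C)\geq 2\delta-2$.

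Finally, since $C$ is a minimum cut, $d_G(C)=\lambda(G)\leq \delta$. Combining with the previous bound gives $2\delta-2\leq \delta$, i.e.\ $\delta\leq 2$, contradicting $\delta\geq 3$. Hence $|C|\geq \delta$, completing the proof. No real obstacle is expected; the only point requiring a little care is making sure the quadratic $k(\delta-k+1)$ is indeed bounded below by $2\delta-2$ on the whole range $[2,\delta-1]$, which is immediate from concavity and checking both endpoints.
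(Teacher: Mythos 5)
Your proof is correct and takes essentially the same route as the paper: both rest on the double-counting bound $\delta \geq \lambda = d(C) \geq \sum_{v\in C}\bigl(d(v)-(|C|-1)\bigr) \geq |C|\bigl(\delta-|C|+1\bigr)$, which the paper resolves by directly factoring out $|C|-1>0$ while you resolve it by contradiction via concavity of the quadratic. Both steps are valid, so nothing is missing.
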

	\begin{proof}
		Let $p := |C|$. Then $\delta \geq \lambda \geq \sum_{v\in C}(d(v)-(p-1)) \geq p\delta - p(p-1)$ implies $p \geq \delta$, as $p>1$.
	\end{proof}
	
	For two vertices $s$ and $t$, let $\lambda_G^*(s,t) := \min\{d(A) \mid A \text{ is an }s\text{-}t\text{-cut of } G\}$ be the \emph{minimum size} of a non-trivial edge-cut separating $s$ and $t$ in a graph $G$ or $\infty$ if no non-trivial $s$-$t$-cut exists. Like the $k$-edge-connectivity relation, the relation $\lambda^*(s,t) \geq k$ is transitive for every $k$:
	
	\begin{lemma}\label{Alem:triangle}
		Every three vertices $s$, $t$ and $u$ of a graph $G$ satisfy the inequality $\lambda^*(s,t) \geq \min\{ \lambda^*(s,u), \lambda^*(u,t)\}$.
	\end{lemma}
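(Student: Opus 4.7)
The plan is to mimic the standard proof of the triangle inequality for $\lambda(s,t)$, with an extra observation that the non-triviality of an $s$-$t$-cut is automatically inherited when we reinterpret it as separating a different pair.

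First I would handle the degenerate case: if no non-trivial $s$-$t$-cut exists, then $\lambda^*(s,t)=\infty$ and the inequality is vacuous. So I may assume the minimum in the definition of $\lambda^*(s,t)$ is attained by some non-trivial $s$-$t$-cut $A$, i.e.\ $s\in A$, $t\in\overline{A}$, $|A|\geq 2$, $|\overline{A}|\geq 2$, and $d(A)=\lambda^*(s,t)$.

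Next I would do a case distinction on which side of $A$ contains the third vertex $u$. If $u\in\overline{A}$, then $A$ still has $s\in A$, $u\in\overline{A}$, and the non-triviality conditions $|A|\geq 2$, $|\overline{A}|\geq 2$ are unchanged, so $A$ is a non-trivial $s$-$u$-cut, giving $\lambda^*(s,u)\leq d(A)=\lambda^*(s,t)$. Symmetrically, if $u\in A$, then $A$ witnesses $\lambda^*(u,t)\leq d(A)=\lambda^*(s,t)$. In either case $\min\{\lambda^*(s,u),\lambda^*(u,t)\}\leq\lambda^*(s,t)$, which is the claim.

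There is really no obstacle here; the only point worth stressing is that non-triviality of a cut is a property of the partition $\{A,\overline{A}\}$ alone and does not depend on which two vertices we think of it as separating, so the same cut serves all three roles without any uncrossing or additional argument. I would keep the proof to three or four lines along the lines sketched above.
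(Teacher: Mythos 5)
Your proof is correct and is essentially the paper's own argument: the paper's one-line proof observes that every non-trivial cut separating $s$ from $t$ is a non-trivial cut separating either $s$ from $u$ or $u$ from $t$, which is exactly your case distinction on which side of the minimum cut $A$ contains $u$. Your explicit remarks on the $\lambda^*(s,t)=\infty$ case and on non-triviality being a property of the partition alone are fine but add nothing beyond what the paper's proof implicitly uses.
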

	\begin{proof}
		Every non-trivial cut separating $s$ from $t$ is a non-trivial cut that either separates $s$ from $u$ or $u$ from $t$.
	\end{proof}
	
	Now we are ready to prove the existence of non-trivial min-cut trees (when $\lambda(G) \neq 0,2$).
	
	\begin{customproposition}{\ref{prop:computationn}}\label{Aprop:computationn}
		Let $G$ be a simple graph with $\lambda(G) \neq 0, 2$. Then a non-trivial min-cut tree $T$ of $G$ can be computed in time $O(n\theta_{flow})$.
	\end{customproposition}
	\begin{proof}
		We maintain a tree $T$ that satisfies Condition~(i) and whose vertices (which we again call blocks) form a partition of $V(G)$. At the beginning, $T$ has only one block, namely $V(G)$, and thus satisfies Condition~(i). However, this block may contain vertices that are separated by a non-trivial min-cut, which violates Condition~(ii).
		
		During the algorithm, we maintain for each block $B$ an ordered list $v_1,v_2,\dots,v_{r_B}$ of its vertices. We additionally maintain an index $1 \leq t_B \leq r_B$ such that all pairs in $\{v_1,v_2,\dots,v_{t_B}\}$ are not separated by a non-trivial min-cut of $G$; initially, we set $t_B := 1$ for every new block.
		
		We iteratively choose any block $B$ that satisfies $t_B < r_B$ and check whether $G$ contains a non-trivial min-cut separating $v_{t_B}$ and $v_{t_B+1}$. This can be done in time $O(\theta_{flow})$ using one flow computation as follows. If $\lambda(v_{t_B},v_{t_B+1}) > \lambda$, there is no such cut. Otherwise, $\lambda(v_{t_B},v_{t_B+1}) = \lambda$; then contracting the strongly connected components of the residual network of the flow computation with the approach of Ball and Provan~\cite{Ball1983} allows to check in time $O(m)$ whether there are only trivial min-cuts separating $v_{t_B}$ and $v_{t_B+1}$.
		
		If $v_{t_B}$ and $v_{t_B+1}$ are not separated by a non-trivial min-cut (i.e.\ $\lambda^*(v_{t_B},v_{t_B+1}) > \lambda$), we claim that $\lambda^*(v_i,v_{t_B+1}) > \lambda$ for every $1 \leq i < t_B$. Note that, by definition of $t_B$, $\lambda^*(v_i,v_{t_B}) > \lambda$. The claim holds as Lemma~\ref{Alem:triangle} implies that $\lambda^*(v_i,v_{t_B+1}) \geq \min\{\lambda^*(v_i,v_{t_B}), \lambda^*(v_{t_B},v_{t_B+1})\} > \lambda$. Hence, we set $t_B := t_B +1$.
		
		Otherwise, $v_{t_B}$ and $v_{t_B+1}$ are separated by a non-trivial min-cut $X \subset V$ with $v_{t_B} \in X$. In that case, we claim that there is a non-trivial min-cut $Z \ni v_{t_B}$ that separates $v_{t_B}$ and $v_{t_B+1}$ and does not cross any cut induced by an edge of $T$. Assume for now $Z$ exists and can be found efficiently. Then, as done for the construction of the pendant tree, we split $B$ into the two blocks $B_0 \ni v_{t_B}$ and $B_1 \ni v_{t_B+1}$ and introduce the new edge $B_0B_1$ to $T$ such that the former neighbors of $B$ are adjacent to either $B_0$ or $B_1$; to which one, is determined by the non-crossing cut $Z$. In total, this takes time $O(\theta_{flow})$ per split. Since Condition~(ii) is satisfied after at most $n-1$ splits (in fact, after at most $\max\{0,n-2\delta+1\}$ splits due to Lemma~\ref{lem:leafn}), this gives the desired statement of the proposition.
		
		It remains to show that we can efficiently find a non-trivial min-cut $Z \ni v_{t_B}$ that separates $v_{t_B}$ and $v_{t_B+1}$ and does not cross any cut induced by an edge of $T$. If $|X \cap B| \geq 2$ and $|\overline{X} \cap B| \geq 2$, the claim follows immediately from applying Lemma~\ref{lem:noncrossing}. Thus, we assume without loss of generality $X \cap B = \{v_{t_B}\}$. In particular, $X \cap \overline{B} \neq \emptyset$, as the cut $X$ is non-trivial, and hence $B \notin V_0$ (i.e.\ $B$ has degree at least one in $T$).
		
		We first consider the case $B\in V_1$. Assume that $\lambda = 1$. If $X$ and $B$ cross, $d(X\cap B) = d(\overline{X}\cap B) = d(X\cap\overline{B}) = d(\overline{X}\cap \overline{B}) = \lambda = 1$ by Lemma~\ref{lem:strongernc}. Thus, there is exactly one edge leaving $X \cap B$; let $A$ be the vertex set in $\{\overline{X}\cap B, X\cap\overline{B}, \overline{X}\cap \overline{B}\}$ where this edge ends. Then, $d((X \cap B) \cup A) = 0$, which violates $\lambda=1$. Hence, $X$ does not cross $B$ and we can simply take $Z := X$.
		
		If $\lambda \neq 1$, then, by the given condition $\lambda \neq 0, 2$, we have $\lambda \geq 3$. By Lemma~\ref{lem:expand}, $|B| \geq \delta \geq \lambda \geq 3$. Hence, $|\overline{X}\cap B| \geq 2$. Therefore, by Lemma~\ref{lem:strongernc}, $\overline{X}\cap B$ is a non-crossing minimum cut and this cut is non-trivial, and we therefore take $Z:= \overline{X}\cap B$.
		
		We now consider the case $B\in V_r$ such that $r\geq 2$. Let $A_1,\dots,A_r$ be the neighbors of $B$ in $T$ and write $C_i:= C_{A_i B}$ for every $i \in \{1,\dots,r\}$. Let without loss of generality $C_1,\dots,C_t$ ($0 \leq t\leq r$) be exactly those $C_i$ with $C_i\subset X$.
		
		If $t > 0$, i.e.\ there is some $i$ such that $C_i\subset X$. Then the cut $Z := X - \bigcup_{i>t} C_i$ is clearly non-trivial. Since $C_i\not\subset X$ for $i>t$, we can iteratively apply Lemma~\ref{lem:strongernc} to $C_i$ for every $i>t$, which trims $X$ to $Z$. Hence, $Z$ is the desired non-crossing non-trivial min-cut.
		
		If $t=0$, i.e.\ $C_i\not\subset X$ for every $i\in \{1,\dots,r\}$. Since $X$ is non-trivial and $|X\cap B|=1$, $X\cap \overline{B}$ is non-empty. Hence, there is some cut, say $C_1$, satisfying $X\cap C_1\neq\emptyset$. Set $Z:= (X\cup C_1) - \bigcup_{i\neq 1} C_i$; hence $Z$ is non-crossing. Since $r\geq 2$, $Z$ is non-trivial. Applying Lemma~\ref{lem:strongernc} to $\overline{X}$ and $\overline{C_1}$ shows that $\overline{X} \cap \overline{C_1} = X \cup C_1$ is a minimum cut. Now applying Lemma~\ref{lem:strongernc} iteratively to $C_i$ for every $i \neq 1$ trims $X \cup C_1$ to $Z$. Hence, $Z$ is the desired non-crossing non-trivial min-cut.
		
		Algorithmically, the case distinction can be computed in time $O(m)$ and every single case can be computed in time $O(\theta_{flow})$ by using a flow routine to obtain non-crossing cuts after appropriate contractions. This gives the overall running time $O(n\theta_{flow})$.
	\end{proof}
	
	Now we give the proofs of other technical lemmas.
	
	\begin{customlemma}{\ref{lem:leafn}}\label{Alem:leafn}
		Let $T$ be a non-trivial min-cut tree of a simple graph $G$. Then every leaf block $A$ of $T$ satisfies $|A| \geq \delta(G)$.
	\end{customlemma}
	\begin{proof}
		It follow immediately from Lemma~\ref{lem:expand}.
	\end{proof}
	
	\begin{customlemma}{\ref{lem:bignonsingletonsn}}\label{Alem:bignonsingletonsn}
		Let $T$ be a non-trivial min-cut tree of a simple graph $G$. Let $A'A,AB,BB'$ be edges in $T$ such that $A,B \in V_2$. If $|A|+|B|>2$, $|A| + |B| \geq \delta(G)/2$.
	\end{customlemma}
	\begin{proof}
		Let $p := |A|$ and $q := |B|$. It is clear that $\sum_{v\in A\cup B} d(v,C_{A'A}) \leq \lambda \leq \delta$, $\sum_{v\in A\cup B} d(v,C_{B'B})\leq \delta$ and $d(v,C_{A'A})+d(v,C_{B'B})\geq d(v)-(p+q-1)$. Therefore, $2\delta \geq \sum_{v\in A\cup B} (d(v,C_{A'A})+d(v,C_{B'B})) \geq \sum_{v\in A\cup B} (d(v)-(p+q-1)) \geq (p+q)(\delta -(p+q-1))$, which gives $p+q \geq \frac{p+q-2}{p+q-1}\cdot \delta \geq \frac{1}{2}\cdot \delta$ if we assume $p+q > 2$.
	\end{proof}
	
	\begin{customlemma}{\ref{lem:rstarn}}\label{Alem:rstarn}
		Let $T$ be a non-trivial min-cut tree of a simple graph $G$. Let $A$ be a block in $V_r$ with neighborhood $B_1,\dots,B_r\in V_2$ in $T$ such that $|A|=|B_1|= \dots = |B_r| = 1$. Then $\delta(G) \leq r^2 + r$.
	\end{customlemma}
	\begin{proof}
		Let $A:=\{v_A\}$. For every $1 \leq i \leq r$, let $B_i:=\{v_i\}$ and $B_i'\neq A$ be the block that is adjacent to $B_i$ in $T$. Write $C_i := C_{B'_iB_i}$. Since every $v_i$ or $v_A$ can have at most $r$ neighbors in $\{v_A,v_1,\ldots,v_r\}$, we have, for every $1 \leq i \leq r$, $d(v_i) \leq r + \sum_{j=1}^r d(v_i,C_j)$, and $d(v_A) \leq r + \sum_{i=1}^r d(v_A,C_i)$. On the other hand, we have, for every $1 \leq i \leq r$,  $d(v_A,C_i)+\sum_{j=1}^r d(v_j,C_i) \leq \lambda \leq \delta$. Therefore, $\sum_{i=1}^r (\delta + d(v_A,C_i) + \sum_{j=1}^r d(v_j,C_i) ) \leq \sum_{i=1}^r ( d(v_i) + d(v_A,C_i) + \sum_{j=1}^r d(v_j,C_i) ) \leq \sum_{i=1}^r ( r + \sum_{j=1}^r d(v_i,C_j) + \delta )$, which implies $r^2\geq \sum_{i=1}^r d(v_A,C_i) \geq d(v_A) - r \geq \delta - r$.
	\end{proof}

	\section{Details and Proofs of Section~\ref{sec:LBPP}}\label{sec:LBPPA}
	Here we give the construction of the pendant tree, as well as details of the necessary technical lemmas that are used to prove the lower bound on pendant pairs.
	
	\begin{customproposition}{\ref{prop:computation}}\label{Aprop:computation}
		A pendant tree $T$ of a graph $G$ can be computed in time $O(n \theta_{cut})$.
	\end{customproposition}
	\begin{proof}
		We first show how to find a tree that satisfies Conditions~(i) and~(iii). We maintain a partition of $V$ that is the vertex set of a tree $T$ and iteratively split one of its blocks into two blocks. At the beginning, the partition is $\{V\}$ and thus $T$ consists of the single block $V$. Clearly, $T$ satisfies Condition~(iii); however, the block may contain non-pendant pairs, which violates Condition~(i). During the algorithm, we maintain for each block $B$ a sorted list of its vertices $v_1,v_2,\dots,v_{r_B}$ such that $d(v_1) \geq d(v_2) \geq \dots \geq d(v_{r_B})$. We additionally maintain an index $1 \leq t_B \leq r_B$ such that all pairs in $\{v_1,v_2,\dots,v_{t_B}\}$ are pendant; initially, we set $t_B := 1$ for every new block.
		
		We iteratively choose any block $B$ that satisfies $t_B < r_B$ and check whether $\{v_{t_B},v_{t_B+1}\}$ is pendant in time $O(\theta_{flow})$ by comparing $\lambda(v_{t_B},v_{t_B+1})$ with the respective vertex degrees. If so, we set $t_B := t_B +1$, as every $\{v_i,v_{t_B+1}\}$ for $1 \leq i < t_B$ is pendant as well, since $\lambda(v_i,v_{t_B+1}) \geq \min\{\lambda(v_i,v_{t_B}),\lambda(v_{t_B},v_{t_B+1})\} = \min\{d(v_{t_B}),d(v_{t_B+1})\} = \min\{d(v_i),d(v_{t_B+1})\}\geq \lambda(v_i,v_{t_B+1})$ by transitivity of $\lambda$ and the definition of pendant pairs.
		
		Otherwise, we apply Lemma~\ref{lem:noncrossing} to find a minimum $v_{t_B}$-$v_{t_B+1}$-cut $X$, say with $v_{t_B} \in X$, that does not cross any cut induced by an edge of $T$. Algorithmically, this cut can be computed by contracting the vertices in the blocks of every component of $T-B$, as done for the construction of Gomory-Hu trees. The cut splits the block $B$ into two blocks $B_0\ni v_{t_B}$ and $B_1\ni v_{t_B+1}$. We introduce the new edge $B_0B_1$ in $T$ such that the former neighbors of $B$ are adjacent to either $B_0$ or $B_1$; to which, is determined by the non-crossing cut $X$. The vertices $v_1,\ldots,v_{t_B}$ will be in $B_0$, since $d(X,\overline{X}) < d(v_{t_B+1}) \leq \min\{d(v_i),d(v_{t_B})\} = \lambda(v_i,v_{t_B})$ for every $1 \leq i < t_B$.
		
		Note that the new vertex lists for $B_0$ and $B_1$ inherit their order from the list of $B$; hence, it suffices to sort the vertices of the single block $V$ at the very beginning in time $O(n+m)$ using bucket sort. Clearly, this algorithm terminates in time $O(n \theta_{flow})$ with a tree that satisfies Condition~(i).
		
		We claim that Condition~(iii) holds throughout the algorithm. The new tree-edge $B_0B_1$ satisfies this condition with representative vertices $v_{t_B}$ and $v_{t_B+1}$. We can reuse the old representatives for every other tree-edge, except for the tree-edges $B_jA$ with $j \in \{0,1\}$ and $A \notin \{B_0,B_1\}$ that had representatives $a^\ast \in A$ and $b^\ast \in B_{1-j}$ before $B$ was split. For these tree edges, we show that $v_{t_B+j}$ and $a^\ast$ are new representatives for $B_jA$ satisfying Condition~(iii). Since $BA$ satisfied Condition~(iii) before, it suffices to prove $\lambda_G(v_{t_B+j},a^\ast) = \lambda_G(b^\ast,a^\ast)$. Note that $\lambda_G(v_{t_B+j},a^\ast) \leq \lambda_G(b^\ast,a^\ast)$, as $v_{t_B+j}$ and $a^\ast$ are separated by the cut induced by $BA$. Moreover, by Lemma~\ref{lem:noncrossing} and transitivity of $\lambda$, we have
		\begin{align*}
			\lambda_G(v_{t_B+j},a^\ast)&=\lambda_{G/C_{B_{1-j}B_j}}(v_{t_B+j},a^\ast) \tag{Lemma~\ref{lem:noncrossing}; $C_{B_{1-j}B_j}$ is contracted into vertex $\{C_{B_{1-j}B_j}\}$}\\
			&\geq \min\{\lambda_{G/C_{B_{1-j}B_j}}(v_{t_B+j}, \{C_{B_{1-j}B_j}\}), \lambda_{G/C_{B_{1-j}B_j}}(\{C_{B_{1-j}B_j}\},a^\ast)\} \tag{by transitivity of $\lambda$}\\
			&=\min\{\lambda_G(v_{t_B},v_{t_B+1}), \lambda_{G/C_{B_{1-j}B_j}}(\{C_{B_{1-j}B_j}\},a^\ast)\} \tag{by choice of $\{C_{B_{1-j}B_j}\}$}\\
			&\geq \lambda_G(b^\ast,a^\ast). \tag{as every $\{C_{B_{1-j}B_j}\}$-$a^\ast$-cut in $G/C_{B_{1-j}B_j}$ is a $b^\ast$-$a^\ast$-cut in $G$}
		\end{align*}
		
	Thus, a tree $T'$ satisfying Conditions~(i) and~(iii) can be computed in time $O(n \theta_{flow})$. In order to compute a pendant tree from $T'$, we check for every edge $AB$ in $T'$ whether there is a non-pendant pair $\{a,b\}$ with $a \in A$ and $b \in B$. According to Lemma~\ref{lem:representative}, it suffices to test whether two arbitrary vertices $a_{max}$ and $b_{max}$ of maximum degree in $A$ and $B$ are pendant (this needs time $O(\theta_{flow})$ per edge). If so, we contract $AB$; by transitivity of $\lambda$, the resulting graph still satisfies Conditions~(i) and~(iii). Hence, a pendant tree can be computed in time $O(n\theta_{flow})$.
	\end{proof}
	
	We note that the same approach can be used to construct a $\delta$-edge-connectivity tree in the same running time $O(n \theta_{flow})$. Next, we give the proofs of the technical lemmas that show that blocks of pendant trees have average size $\Omega(\delta)$, see Section~\ref{sec:largeblockPT} for more discussion.
		
	\begin{customlemma}{\ref{lem:leaf}}\label{Alem:leaf}
		Every leaf block $A$ of a pendant tree $T$ of a simple graph $G$ satisfies $|A| > \delta(G)$.
	\end{customlemma}
	\begin{proof}
		Let $p:=|A| \geq 1$ and let $B$ be the block adjacent to $A$ in $T$. We have $\max_{v\in A} d(v) > c(AB) \geq \sum_{v\in A}(d(v)-(p-1)) \geq \max_{v\in A} d(v) + \delta(p-1) - p(p-1)$, where the last inequality singles out the maximum degree. Therefore, $p>1$ and $p>\delta$.
	\end{proof}
	
	\begin{lemma}\label{Alem:bignonsingletons}
		Let $T$ be a pendant tree of a simple graph $G$. Let $A'A,AB,BB'$ be edges in $T$ such that $A,B \in V_2$. If $|A|+|B|>2$, $|A|+|B| \geq \delta(G)-1$.
	\end{lemma}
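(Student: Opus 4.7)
Write $p := |A|$, $q := |B|$, $s := p+q$, and $\delta := \delta(G)$. Since $A,B \in V_2$ and the only tree-edges incident to $A \cup B$ are $A'A$, $AB$ and $BB'$, removing $A$ and $B$ from $T$ produces exactly two subtrees; denote the vertex-union of the one containing $A'$ by $X$, and of the one containing $B'$ by $Y$. Then $V(G) = A \cup B \cup X \cup Y$ is a disjoint union, and $c(A'A) = d_G(X)$, $c(BB') = d_G(Y)$. The strategy is to sandwich $\sum_{v \in A \cup B} d_G(v)$ between two bounds, chosen so that Lemma~\ref{lem:maxdeg} forces a clean cancellation.

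\textbf{Two-sided bound on the degree sum.} Pick $a_{\max} \in A$ and $b_{\max} \in B$ of maximum degree within their respective blocks, isolate their contributions, and bound the remaining $s-2$ degrees by $\delta$ to obtain
\[
\sum_{v \in A \cup B} d(v) \;\geq\; d(a_{\max}) + d(b_{\max}) + (s-2)\,\delta.
\]
For the matching upper bound, $G$ is simple, so the number of edges of $G$ with both endpoints in $A \cup B$ is at most $\binom{s}{2}$. Every other edge incident to $A \cup B$ has its remaining endpoint in $X$ or $Y$ and therefore contributes to $d_G(X) = c(A'A)$ or $d_G(Y) = c(BB')$; hence
\[
\sum_{v \in A \cup B} d(v) \;\leq\; s(s-1) + c(A'A) + c(BB').
\]

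\textbf{Applying Lemma~\ref{lem:maxdeg} and finishing.} Lemma~\ref{lem:maxdeg}, applied to the edges $A'A$ and $BB'$ with $a_{\max}$ and $b_{\max}$ as the distinguished vertices, yields $c(A'A) \leq d(a_{\max}) - 1$ and $c(BB') \leq d(b_{\max}) - 1$. Plugging these into the upper bound and comparing with the lower bound, the terms $d(a_{\max})$ and $d(b_{\max})$ cancel exactly, leaving
\[
(s-2)\,\delta \;\leq\; s(s-1) - 2.
\]
Since $s^2 - s - 2 = (s-2)(s+1)$ and $s > 2$ by hypothesis, division by $s-2$ is legal and produces $\delta \leq s+1$, i.e.\ $|A|+|B| \geq \delta(G) - 1$.

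\textbf{Main obstacle.} The only subtle point is the choice of lower bound. A plain averaging bound $\sum_{v \in A \cup B} d(v) \geq s\delta$ (as in the min-cut-tree analogue Lemma~\ref{Alem:bignonsingletonsn}, where one may use $c(e) \leq \lambda \leq \delta$ uniformly) gives only a bound of order $\delta/2$, because in the pendant-tree setting Lemma~\ref{lem:maxdeg} replaces this uniform bound by the a priori larger quantities $d(a_{\max})-1$ and $d(b_{\max})-1$. Keeping $d(a_{\max}) + d(b_{\max})$ explicit in the lower bound neutralises this loss entirely; once the cancellation occurs, the resulting quadratic in $s$ factors cleanly and the rest of the argument is routine bookkeeping.
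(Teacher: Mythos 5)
Your proof is correct and follows essentially the same route as the paper's: both arguments single out the maximum-degree vertices of $A$ and $B$, bound the two boundary cuts via Lemma~\ref{lem:maxdeg} by $d(a_{\max})-1$ and $d(b_{\max})-1$, use simplicity to bound the internal edge contribution by $s(s-1)$, and cancel to obtain $(s-2)\delta \leq (s-2)(s+1)$. The only cosmetic difference is that the paper sums $d(v,C_{A'A})+d(v,C_{B'B})$ directly rather than sandwiching $\sum_{v\in A\cup B} d(v)$.
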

	\begin{proof}
		Let $p := |A|$ and $q := |B|$. By Lemma~\ref{lem:maxdeg}, we have $\sum_{v\in A\cup B} d(v,C_{A'A}) \leq c(A'A) \leq \max_{v\in A} d(v)-1$ and $\sum_{v\in A\cup B} d(v,C_{B'B})\leq \max_{v\in B} d(v)-1$. For $v\in A\cup B$, there are at most $p+q-1$ edges that are incident to $v$ and $A \cup B$, which implies $d(v,C_{A'A})+d(v,C_{B'B})\geq d(v)-(p+q-1)$. Therefore, $\max_{v\in A} d(v)+\max_{v\in B} d(v)-2\geq \sum_{v\in A\cup B} (d(v,C_{A'A})+d(v,C_{B'B})) \geq \sum_{v\in A\cup B} (d(v)-(p+q-1)) \geq \max_{v\in A} d(v)+\max_{v\in B} d(v) + (p+q-2)\delta-(p+q)(p+q-1)$, which gives $(p+q)(p+q-1) \geq (p+q-2)\delta+2$ and thus $(p+q)(p+q-2) \geq (p+q-2)(\delta-1)$. Hence, $p+q \geq \delta-1$ if $p+q > 2$.
	\end{proof}
	
	\begin{lemma}\label{Alem:rstar}
		Let $T$ be a pendant tree of a simple graph $G$ with $|V(T)|>1$. Let $A=\{v_A\}$ be a block in $V_r$ with neighborhood $B_1,\dots,B_r\in V_2$ in $T$ such that $|A|=|B_1|= \dots = |B_r| = 1$. Let $B_i'\neq A$ be the block that is adjacent to $B_i$ in $T$. Then $d(v_A) \leq r^2-2\gamma$, where $\gamma := \sum_{1 \leq i < j \leq r} d(C_{B'_iB_i},C_{B'_jB_j})$ is the number of \emph{cross-edges}. In particular, we have $\delta(G) \leq r^2$ and $\lambda(G) < r^2$. Moreover, if $r = 2$, $\kappa(G) \leq 2$.
	\end{lemma}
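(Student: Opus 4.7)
The plan is to bound $d(v_A)$ by expanding the edge-cuts of each $C_i$ and each $v_i$ in terms of edges between the parts of the partition $V(G) = \{v_A\} \cup \{v_1, \ldots, v_r\} \cup C_1 \cup \cdots \cup C_r$ (which is indeed a partition, since the $C_i$ come from disjoint subtrees of $T-A$), then to apply Lemma~\ref{lem:maxdeg} to each tree edge $B_i'B_i$ and sum over $i$.

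Concretely, I first observe that Lemma~\ref{lem:maxdeg} applied to $B_i'B_i$ yields $d(C_i)=c(B_i'B_i)\le d(v_i)-1$, since $v_i$ is the unique vertex of $B_i$. Next I expand $d(v_A)$, $d(v_i)$ and $d(C_i)$ into the pairwise edge counts $d(v_A,v_j), d(v_A,C_j), d(v_i,v_j), d(v_i,C_j), d(C_i,C_j)$. Summing the $r$ inequalities $d(C_i)\le d(v_i)-1$ and using the symmetry $\sum_i\sum_j d(v_j,C_i)=\sum_i\sum_j d(v_i,C_j)$ (obtained by renaming $i\leftrightarrow j$) to cancel the mixed $v$-to-$C$ terms on both sides, I expect to obtain
\[
d(v_A) + 2\gamma \;\le\; 2\sum_{i=1}^r d(v_A,v_i) \;+\; \sum_{i\neq j} d(v_i,v_j) \;-\; r.
\]
Since $G$ is simple, $d(v_A,v_i),d(v_i,v_j)\in\{0,1\}$, so the right-hand side is at most $2r + r(r-1) - r = r^2$, yielding the main bound $d(v_A)\le r^2-2\gamma$. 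The two in-particular corollaries then follow: $\delta(G)\le d(v_A)\le r^2$, and $\lambda(G)\le c(AB_1) < d(v_A)\le r^2$ by Lemma~\ref{lem:maxdeg} applied to $AB_1$.

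For the final claim $\kappa(G)\le 2$ when $r=2$, the idea is to apply Lemma~\ref{lem:maxdeg} to both edges $AB_1, AB_2$ to get $c(AB_1)+c(AB_2)\le 2d(v_A)-2$, and expand: every edge incident to $v_A$ is counted once and every edge between $\{v_1\}\cup C_1$ and $\{v_2\}\cup C_2$ is counted twice, hence
\[
c(AB_1)+c(AB_2) \;=\; d(v_A) + 2\bigl[d(v_1,v_2)+d(v_1,C_2)+d(v_2,C_1)+\gamma\bigr].
\]
Combined with $d(v_A)\le 4-2\gamma$, this forces $d(v_1,v_2)+d(v_1,C_2)+d(v_2,C_1)+2\gamma\le 1$, so in particular $\gamma=0$. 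The step requiring the most care is the ensuing case analysis: if all four quantities vanish, then $\{v_A\}$ alone disconnects $\{v_1\}\cup C_1$ from $\{v_2\}\cup C_2$; otherwise exactly one of $d(v_1,v_2), d(v_1,C_2), d(v_2,C_1)$ equals $1$, and the appropriate pair from $\{\{v_A,v_1\},\{v_A,v_2\}\}$ is a $2$-vertex separator of $G$, because removing it leaves the corresponding $C_i$ with no edges to the remaining vertices.
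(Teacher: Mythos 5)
Your derivation of the main bound $d(v_A) \le r^2 - 2\gamma$ is correct and is essentially the paper's computation: both apply Lemma~\ref{lem:maxdeg} to each edge $B_i'B_i$ to get $d(C_i) \le d(v_i)-1$, expand all degrees over the partition $\{v_A\}\cup\{v_1,\dots,v_r\}\cup C_1\cup\dots\cup C_r$, sum over $i$, and cancel the mixed $v$-to-$C$ terms by the symmetry $\sum_{i,j} d(v_j,C_i)=\sum_{i,j} d(v_i,C_j)$; the only cosmetic difference is that the paper bounds $d(v_i)\le r+\sum_j d(v_i,C_j)$ up front while you keep the exact expansion and invoke simplicity of $G$ only at the end. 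The ``in particular'' consequences are handled identically. Where you genuinely diverge is the final claim $\kappa(G)\le 2$ for $r=2$: the paper starts from the $3$-separator $\{v_A,v_1,v_2\}$ and argues by cases that either some vertex of it can be dropped or $d(v_A)=2$, whereas you apply Lemma~\ref{lem:maxdeg} to $AB_1$ and $AB_2$, expand $c(AB_1)+c(AB_2)=d(v_A)+2\bigl[d(v_1,v_2)+d(v_1,C_2)+d(v_2,C_1)+\gamma\bigr]$ (which I checked and is correct), and combine with $d(v_A)\le 4-2\gamma$ to force $d(v_1,v_2)+d(v_1,C_2)+d(v_2,C_1)+2\gamma\le 1$; the explicit $1$- or $2$-vertex separator then falls out of the case analysis, using that each $C_i\supseteq B_i'$ is non-empty. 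Your version buys a sharper structural conclusion (at most one edge besides those through $v_A$ joins the two sides) and avoids the paper's slightly delicate ``every vertex of $S$ has a neighbor in both $C_i$'' subcase; the paper's version is shorter to state because it reuses the already-proved bound $d(v_A)\le 4-2\gamma$ with less bookkeeping. Both are valid.
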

	\begin{proof}
		Note that $r\geq2$, since there is no singleton leaf block (Corollary~\ref{cor:nosingletonleaf}). For every $1 \leq i \leq r$, let $B_i=\{v_i\}$ and $C_i := C_{B'_iB_i}$. Since every $v_i$ or $v_A$ can have at most $r$ neighbors in $\{v_A,v_1,\ldots,v_r\}$, we have $d(v_i) \leq r + \sum_{j=1}^r d(v_i,C_j)$ for every $1 \leq i \leq r$, and $d(v_A) \leq r + \sum_{i=1}^r d(v_A,C_i)$. On the other hand, by Lemma~\ref{lem:maxdeg}, we have, for every $1 \leq i \leq r$, $d(v_A,C_i)+\sum_{j=1}^r d(v_j,C_i) + \sum_{j \in \{1,\dots,r\}-i} d(C_j,C_i) \leq d(C_i) \leq d(v_i)-1$ (see Figure~\ref{fig:rsquare}). Therefore,
		\begin{align*}
		&\sum_{i=1}^r \left(d(v_i) + d(v_A,C_i) + \sum_{j=1}^r d(v_j,C_i) + \sum_{j \neq i} d(C_j,C_i)\right)\\
		& \hspace{6cm} \leq \sum_{i=1}^r \left(r + \sum_{j=1}^r d(v_i,C_j) + d(v_i)-1\right)\\
		\Leftrightarrow & \sum_{i=1}^r \left( d(v_A,C_i) + \sum_{j \neq i} d(C_j,C_i) \right) \leq r^2-r\\
		\Leftrightarrow & \sum_{i=1}^r d(v_A,C_i) \leq r^2-r-2\gamma,
		\end{align*}and hence, $d(v_A) \leq r + \sum_{i=1}^r d(v_A,C_i) \leq r^2-2\gamma$. In particular, this gives $\delta \leq r^2$ and, according to Lemma~\ref{lem:maxdeg}, $\lambda \leq c(A B_1) < d(v_A) \leq r^2$.
		
		Now, we claim that, if $r = 2$, then $\kappa \leq 2$. If $\gamma > 0$, then $\kappa \leq \delta \leq d(v_A) \leq r^2-2\gamma \leq 2$. If $\gamma = 0$, let $S := \{v_A,v_1,v_2\}$, which is a separator of $G$ of size $3$. If a vertex $z \in S$ has no neighbor in $C_i$ for some $1 \leq i \leq 2$, $S - z$ is a separator of size $2$, which gives the claim. Otherwise, we have $c(B_1A) \geq 3$ and $c(AB_2) \geq 3$, and in addition, $c(B_1A) = c(AB_2) = 3$, according to $d(v_A) \leq 4$ and Lemma~\ref{lem:maxdeg}. Hence, $v_A$ is of degree 2 in $G$, which gives the claim.
	\end{proof}
	
	Setting $r=2$ in Lemma~\ref{Alem:rstar} gives the following corollary for adjacent blocks of 2-paths. Note that the proof of Lemma~\ref{Alem:rstar} allows to weaken the conditions of this corollary further if the number of cross-edges is large.
	
	\begin{corollary}\label{Acor:innerblocks}
		Let $G$ be simple and let $AB$ and $BC$ be edges in a 2-path of $T$. If $\delta(G) \geq 5$ or $\lambda(G) \geq 4$ or $\kappa(G) \geq 3$, then $|A|+|B|+|C|>3$.
	\end{corollary}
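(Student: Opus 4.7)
The plan is to argue by contradiction and apply Lemma~\ref{Alem:rstar} with $r = 2$. Since each block of a pendant tree is non-empty, we have $|A|, |B|, |C| \geq 1$, so assuming $|A| + |B| + |C| \leq 3$ forces $|A| = |B| = |C| = 1$.

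Next, I would observe that the hypothesis places all three blocks inside a 2-path, so by definition of a 2-path each of $A, B, C$ has degree exactly $2$ in $T$ and in particular lies in $V_2$. Writing $B = \{v_B\}$ with $A, C$ as its two tree-neighbors, this is precisely the configuration in Lemma~\ref{Alem:rstar} with $r = 2$: a degree-$r$ singleton block whose $r$ tree-neighbors are singleton blocks in $V_2$. The one thing worth checking is that the hypotheses of Lemma~\ref{Alem:rstar} really are met, i.e. that $A$ and $C$ are in $V_2$ rather than being boundary blocks of the 2-path that might fail to satisfy the assumption; this follows directly from the definition of a 2-path, whose vertices are exactly blocks of degree $2$.

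Invoking Lemma~\ref{Alem:rstar} then yields the three conclusions $\delta(G) \leq r^2 = 4$, $\lambda(G) < r^2 = 4$ (so $\lambda(G) \leq 3$), and $\kappa(G) \leq 2$. Each of these directly contradicts one of the three disjunctive assumptions $\delta(G) \geq 5$, $\lambda(G) \geq 4$, $\kappa(G) \geq 3$. Hence the assumption $|A|+|B|+|C| = 3$ must fail, proving $|A|+|B|+|C| > 3$.

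I do not anticipate any real obstacle here: the work has been done in Lemma~\ref{Alem:rstar}, and the only substantive point is the structural observation that $A, B, C \in V_2$ once we know $AB, BC$ lie in a 2-path, so that the lemma applies with the central block in the role of the degree-$r$ block and its two neighbors as the $B_i$.
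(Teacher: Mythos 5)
Your proof is correct and matches the paper's argument exactly: the paper also obtains this corollary by setting $r=2$ in Lemma~\ref{Alem:rstar}, with the contradiction forcing all three blocks to be singletons in $V_2$ so that the lemma's conclusions $\delta \leq 4$, $\lambda < 4$, $\kappa \leq 2$ contradict the hypothesis. Your explicit check that $A,B,C \in V_2$ (so the lemma's hypotheses are met) is a detail the paper leaves implicit, but it is the right observation.
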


	Now we are ready to show that the blocks of 2-paths contain many vertices if $\delta(G) \geq 5$ or $\lambda(G) \geq 4$ or $\kappa(G) \geq 3$.
	
	\begin{customcorollary}{\ref{lem:big2paths}}\label{Alem:big2paths}
		Let $T$ be a pendant tree of a simple graph $G$ satisfying $\delta(G) \geq 5$ or $\lambda(G) \geq 4$ or $\kappa(G) \geq 3$. Let $P$ be a 2-path of $T$. Then $\sum_{S\in V(P)} |S| \geq (|V(P)|-2)\frac{\max\{4,\delta(G)\}}{3}+2$.
	\end{customcorollary}
	\begin{proof}
		For any two consecutive edges $AB$ and $BC$ in $P$, applying Corollary~\ref{Acor:innerblocks} gives $|A|+|B|+|C|>3$. Due to Lemma~\ref{Alem:bignonsingletons}, this implies $|A|+|B|+|C|\geq \max\{4,\delta\}$. Since there may be at most two singletons that are not contained in such a triple, we conclude that $$\sum_{S\in V(P)}|S| \geq (|V(P)|-2)\frac{\max\{4,\delta\}}{3}+2.$$
	\end{proof}
	
	We will use these results on large blocks to obtain our main Theorems~\ref{thm:independent} and~\ref{thm:main}. While the latter shows the existence of $\Omega(\delta n)$ pendant pairs, as mentioned in the introduction, the former gives an upper of the number of blocks of pendant tree.
	
	\begin{customtheorem}{\ref{thm:independent}}\label{Athm:independent}
		Let $G$ be a simple graph that satisfies $\delta(G) \geq 5$ or $\lambda(G) \geq 4$ or $\kappa(G) \geq 3$. Let $T$ be a pendant tree of $G$. Then $T$ has at most $\frac{12}{\delta + 12} n = O(n/\delta)$ blocks.
	\end{customtheorem}
	\begin{proof}
		Let $T$ be a pendant tree of $G$. We can assume that $|V(T)| \geq 2$. Using the previous results, we can relate $n$ with the number of blocks in $T$ by considering the blocks in $V_2$ separately as follows:
		\begin{align*}
		n &=\sum_{S\in V(T)} |S|\\
		&=|V(T)|+\sum_{S\in V_1\cup V_{>2}} \left(|S|-1\right) +\sum_{\textrm{2-path }P}\left(\sum_{S\in V(P)}\left(|S|\right)-|V(P)|\right)\\
		&\geq |V(T)|+|V_1|\delta+\sum_{\textrm{2-path }P,\ |V(P)| \geq 3} \left(|V(P)|-2\right)\left(\frac{\max\{4,\delta\}}{3}-1\right) \tag{by Lemmas~\ref{lem:leaf} and~\ref{lem:big2paths}}\\
		&\geq |V(T)|+|V_1|\delta+\frac{1}{12}|V_2^{in}|\delta \tag{as $\delta=3 \Rightarrow (\frac{4}{3}-1) > \frac\delta{12}$ and $\delta \geq 4 \Leftrightarrow \frac{\delta}{3}-1\geq\frac\delta{12}$}\\
		&\geq |V(T)|+\frac{1}6(|V_1|+|V_2^{out}|+|V_{>2}|)\delta+\frac{1}{12}|V_2^{in}|\delta \\ \tag{since $6|V_1| \geq |V_1|+4|V_2^{out}|+|V_{>2}|$ by Lemma~\ref{lem:leafbound}}\\
		&\geq\left(1+\frac{1}{12}\delta\right)|V(T)|.
		\end{align*} Hence, $|V(T)| \leq (1+\frac{1}{12}\delta)^{-1}n=O(n/\delta)$.
	\end{proof}
	
	This already shows that $G$ contains at least $n-|V(T)| \geq (1-\frac{12}{\delta+12})n = \Omega(n)$ pendant pairs. We now improve this lower bound to $\Omega(\delta n)$ in the following theorem. This is done by grouping the blocks in a more subtle way. The main idea is that the blocks are of average size $\Omega(\delta)$ and therefore contain $\Omega(\delta^2)$ pendant pairs on average. As the number of blocks is $O(n/\delta)$, we thus expect that the number of pendant pairs is $\Omega(\frac{n}{\delta} \cdot \delta^2) = \Omega(\delta n)$.
	
	\begin{customtheorem}{\ref{thm:main}}\label{Athm:main}
		Let $G$ be a simple graph that satisfies $\delta(G) \geq 5$ or $\lambda(G) \geq 4$ or $\kappa(G) \geq 3$. Then $G$ contains at least $\frac{1}{30}\delta n = \Omega(\delta n)$ pendant pairs.
	\end{customtheorem}
	\begin{proof}
		Note that $n > \delta \geq 3$. If $G$ does not contain a non-pendant pair, there are $\binom{n}{2} \geq \frac{\delta n}{30}$ pendant pairs in $G$. Otherwise, $G$ contains a non-pendant pair. Let $T$ be a pendant tree of $G$; then $|V(T)| \geq 2$.
		
		For each 2-path $P$ with $|V(P)| \geq 3$, let $P^*$ be a subpath obtained from $P$ by deleting at most two endblocks (that is, blocks in $P \cap V_2^{out}$) of $P$ such that $|V(P^*)|$ is a multiple of $3$. Then, we split $P^*$ into subpaths $P^*_{1},\dots,P^*_{|V(P^*)|/3}$, each of size 3. Now, let $M_P$ be a collection of blocks that contains exactly one block $S_i \in V(P^*_{i})$ for every $i = 1,\dots,\frac{|V(P^*)|} 3$, such that $S_i$ is of maximum size amongst other blocks in $V(P^*_{i})$. By Corollary~\ref{Acor:innerblocks} and Lemma~\ref{Alem:bignonsingletons}, every block $S \in M_P$ is of size at least $\max\{2,(\delta-1)/2\}$.
		
		Let $V_2^* := V_2 - \bigcup_{\textrm{2-path }P, |V(P)| \geq 3}V(P^*)\subseteq V_2^{out}$. For every leaf block $S \in V_1$, let $Y_S$ be a collection of blocks that consists of $S$, at most four blocks from $V_2^*$ and at most one block from $V_{>2}$ such that $Y_S$ ($S \in V_1$) form a partition of $V_1\cup V_2^*\cup V_{>2}$; such allocation exists as $|V_2^*|\leq|V_2^{out}|\leq 4|V_1|$ and $|V_{>2}|\leq |V_1|$ (Lemma~\ref{lem:leafbound}). For every $S\in V_1$, let $D_S$ be a block in $Y_S$ of maximum size. Then, by Lemma~\ref{lem:leaf}, $|D_S| \geq |S| > \delta$. 
		
		Now we can count the number of pendant pairs to obtain the desired lower bound, as the blocks have average size $\Omega(\delta)$. The number of pendant pairs in $G$ is at least
		\begin{align*}
		&\sum_{S\in V(T)}\binom{|S|}2\\
		&\geq \sum_{S\in V_1}\frac{|D_S|(|D_S|-1)}2 + \sum_{\textrm{2-path }P,\ |V(P)| \geq 3\ }\sum_{S\in M_P}\frac{|S|(|S|-1)}2\\
		&\geq \frac{\delta}{2}\sum_{S\in V_1}|D_S| + \frac{\delta}{10}\sum_{\textrm{2-path }P,\ |V(P)| \geq 3\ }\sum_{S\in M_P}|S| \tag{as $|D_S| > \delta$, $|S|\geq\max\{2,\frac{\delta-1}2\}$ and $\delta\geq 3$}\\
		&\geq \frac{\delta}{2}\cdot\frac{1}{6} \sum_{S\in V_1\cup V_2^*\cup V_{>2}}|S| + \frac{\delta}{10}\cdot\frac{1}{3} \sum_{S \in V_2 - V_2^*}|S|\\
		&\geq \frac{1}{30}\delta n = \Omega(\delta n).
		\end{align*}
	\end{proof}

\end{document}